\titleformat{\section}{\normalfont\scshape\centering}{\thesection}{1em}{}
  \titleformat{\subsection}{\bfseries}{\thesubsection}{1em}{}
  \titleformat{\subsubsection}{\bfseries}{\thesubsubsection}{1em}{}
\newtheorem{theorem}{Theorem}[section]
\newtheorem{corollary}[theorem]{Corollary}
\newtheorem{lemma}[theorem]{Lemma}
\newtheorem{proposition}[theorem]{Proposition}
\theoremstyle{definition}
\newtheorem{definition}[theorem]{Definition}
\newtheorem{remark}[theorem]{Remark}
\newtheorem{conjecture}[theorem]{Conjecture}
\numberwithin{equation}{section}
\newcommand\eps{\varepsilon}
\renewcommand{\Re}{\textnormal{Re}}
\newcommand\R{\mathbb{R}}
\newcommand\Z{\mathbb{Z}}
\newcommand\D{\mathbb{D}}
\newcommand\N{\mathbb{N}}
\newcommand\C{\mathbb{C}}
\newcommand\n{\mathbf{n}}
\newcommand{\1}{{\mathbf 1}}
\newcommand{\dd}{{\rm d}}
\begin{document}
\title{On the Hardy--Littlewood--Chowla conjecture\\on average}

\author{Jared Duker Lichtman}
\address{Mathematical Institute, University of Oxford \\
Woodstock Road \\
Oxford OX2 6GG \\
United Kingdom}
\email{jared.d.lichtman@gmail.com}

\author{Joni Ter\"av\"ainen}
\address{Department of Mathematics and Statistics \\
University of Turku, 20014 Turku\\
Finland}
\email{joni.p.teravainen@gmail.com}

\subjclass[2010]{11P32, 11L20, 11N37}

\date{June 21, 2022.}

\maketitle

\begin{abstract}
There has been recent interest in a hybrid form of the celebrated conjectures of Hardy--Littlewood and of Chowla. We prove that for any $k,\ell\ge1$ and distinct integers $h_2,\ldots,h_k,a_1,\ldots,a_\ell$, we have
$$\sum_{n\leq X}\mu(n+h_1)\cdots \mu(n+h_k)\Lambda(n+a_1)\cdots\Lambda(n+a_{\ell})=o(X)$$
for all except $o(H)$ values of $h_1\leq H$, so long as $H\geq (\log X)^{\ell+\eps}$. This improves on the range $H\ge (\log X)^{\psi(X)}$, $\psi(X)\to\infty$, obtained in previous work of the first author. Our results also generalize from the M\"obius function $\mu$ to arbitrary (non-pretentious) multiplicative functions.
\end{abstract}

\section{Introduction}
Let $\mu$ and $\Lambda$ denote the M\"obius and von Mangoldt functions, respectively, defined by
\[
\mu(n) = \begin{cases}
(-1)^r & n=p_1\cdots p_r,\; p_i \text{ distinct,}\\
0 & \ \text{else.}
\end{cases}\quad\text{and} \quad
\Lambda(n) = \begin{cases}
\log p & n=p^e,\;e\ge1,\\
0 & \ \text{else.}
\end{cases}
\]
Also let $\lambda$ be the Liouville function given by $\lambda(n)=(-1)^{\Omega(n)}$, where $\Omega(n)$ is the total number of prime factors of $n$ with multiplicities. 
For technical convenience, we extend these functions to the nonpositive integers as zero (the choice of the extension makes no difference).  Recall that the prime number theorem is equivalent to the average bounds $\sum_{n\le X}\mu(n) = o(X)$ and $\sum_{n\le X}\Lambda(n) = (1+o(1))X$. 

The influential conjectures of Chowla~\cite{Chowla} and Hardy--Littlewood~\cite{HardLittl} assert that for any fixed tuple of distinct integers $h_1,\ldots,h_k$,
\begin{align*}
\sum_{n\le X}\mu(n+h_1)\cdots\mu(n+h_k) \ &= \ o(X),\\
\sum_{n\le X}\Lambda(n+h_1)\cdots\Lambda(n+h_k) \ & = \ (\mathfrak{S}(\mathcal{H}) + o(1))X,
\end{align*}
where $\mathcal{H}=\{h_1,\ldots, h_k\}$ and the singular series is given by
\begin{align}\label{eq:S}\mathfrak{S}(\mathcal{H}) = \prod_{p}\frac{1-\nu_p(\mathcal{H})/p}{(1-1/p)^k},
\end{align}
where $\nu_p(\mathcal{H}) = |\{h\;(\text{mod }p) : h\in\mathcal{H}\}|$. Both conjectures remain open for any $k\ge2$.

It is natural to consider the following hybrid conjecture which, following~\cite{Lichtman}, \cite{tt-siegel}, we call the Hardy--Littlewood--Chowla conjecture.

\begin{conjecture}[Hardy--Littlewood--Chowla]\label{conj:HLC}
Let $k,\ell\geq 0$, and let $h_1,\ldots,h_k$, $a_1,\ldots, a_\ell$ be distinct integers. Then we have\footnote{Here we use the convention that an empty product equals to $1$.}
\begin{align}\label{eq:HLCconj}
\sum_{n\le X}\mu(n+h_1)\cdots\mu(n+h_k)\Lambda(n+a_1)\cdots\Lambda(n+a_\ell) \ & = \ (\mathfrak{S} + o(1))X.
\end{align}
Here $\mathfrak{S}=\mathfrak{S}(\{a_1,\ldots, a_\ell\})$ as in \eqref{eq:S} if $k=0$, and $\mathfrak{S}=0$ if $k>0$.
\end{conjecture}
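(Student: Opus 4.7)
The plan is to attack the conjecture by separating the $\mu$-correlations from the $\Lambda$-correlations and handling each with the deepest available machinery, then coupling them through a Gowers-norm framework. First I would apply a Heath-Brown (or Vaughan) identity to each $\Lambda(n+a_j)$, decomposing it into $O((\log X)^{O(1)})$ many Type I and Type II bilinear pieces. This rewrites the left-hand side of \eqref{eq:HLCconj} as a bounded combination of sums of the form
$$\sum_{n \le X} \mu(n+h_1) \cdots \mu(n+h_k) \Bigl(\sum_{d \mid n+a_1} \alpha_{d,1}\Bigr) \cdots \Bigl(\sum_{d \mid n+a_\ell} \alpha_{d,\ell}\Bigr),$$
where each inner sum is either smooth and supported on a short range of $d$ (Type I), or a bilinear sum amenable to Cauchy-Schwarz (Type II).

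The next step splits according to $(k,\ell)$. For $k=0$, the claim is the pure Hardy-Littlewood prime tuples conjecture; the natural route is the Green-Tao transference principle combined with the Green-Tao-Ziegler inverse theorem for the Gowers $U^{\ell-1}$-norm, reducing matters to the singular series main term $\mathfrak{S}$ via local Hardy-Littlewood computations, modulo control of the uniformity norms of $\Lambda - \Lambda^\sharp$ for a suitable arithmetic model $\Lambda^\sharp$. For $\ell=0$ we are in the pure Chowla regime; here I would invoke the Matomäki-Radziwi\l{}\l{} short-interval theorem for multiplicative functions together with Tao's logarithmic entropy-decrement argument to extract cancellation in the $\mu$-correlations. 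For the genuinely mixed case ($k,\ell\ge 1$), the Type I pieces reduce to $\mu$-correlations summed along arithmetic progressions, to which the entropy-decrement argument can be applied shift-by-shift; the Type II pieces are handled by Cauchy-Schwarz, producing a bilinear $\mu$-correlation whose cancellation should follow from either the $U^k$-uniformity of $\mu$ (Green-Tao) or a major-arc / minor-arc dichotomy powered by Montgomery-Vaughan type bilinear estimates.

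The main obstacle, and the reason the conjecture as stated remains open, is that both the pure Chowla ($k\ge 2$) and pure Hardy-Littlewood ($\ell\ge 2$) specializations are themselves unsolved, and any genuine proof of \eqref{eq:HLCconj} must yield both as corollaries. The Matomäki-Radziwi\l{}\l{}-Tao apparatus currently delivers Chowla-type cancellation only on logarithmic average and for $k=2$ or under weighted variants, while the Green-Tao-Ziegler machinery produces prime-tuple counts only for finite-complexity linear systems and only after transference. The hardest step is thus upgrading both the short-interval / logarithmic-average cancellation of $\mu$ and the Gowers-uniformity of $\Lambda$ into the pointwise-in-$X$ asymptotic with the correct main term $\mathfrak{S}$. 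I would expect any realistic attack to first require settling one of these two conjectures; absent such a breakthrough, the strongest result obtainable along these lines is the averaged version over $h_1$ described in the abstract of the present paper.
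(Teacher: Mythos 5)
The statement you were given is Conjecture~\ref{conj:HLC}, which the paper does not prove: it is explicitly a conjecture, open for all $k+\ell\ge 2$, and the paper's contribution (Theorem~\ref{thm_hybrid}) is an averaged version over $h_1$, not the pointwise asymptotic. You have correctly recognized this, and your closing paragraph accurately identifies the obstruction: both pure specializations (Chowla when $\ell=0$, Hardy--Littlewood when $k=0$) are themselves open, so no unconditional proof of the hybrid is currently possible. There is therefore no proof in the paper to compare your argument against, and your proposal is candidly a survey of attack routes rather than a proof.

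One clarification worth making about the survey: the machinery you sketch (Heath-Brown/Vaughan decomposition of $\Lambda$, Type I/II bilinear analysis, Green--Tao transference, Gowers norms, entropy decrement) is representative of the wider literature, but it is not the route the paper takes even for its averaged theorem. Theorem~\ref{thm_hybrid} is proved without any Vaughan-type decomposition, transference, or uniformity norm. Instead the paper uses the Fourier/circle-method identity of Proposition~\ref{thm:fourier}, which reduces the $h_1$-averaged correlation to (a) a short-interval exponential sum bound for $\mu\cdot\mathbf{1}_{\mathcal{S}}$ from Matom\"aki--Radziwi\l\l--Tao (Proposition~\ref{thm:MRTChow}), and (b) a $p$-th moment estimate for the short exponential sum of $g(n)\prod_{j}\Lambda(n+a_j)$ obtained purely from Selberg's sieve and averages of singular series (Proposition~\ref{lem:Lambda6moment}). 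The $\Lambda$ factors never require bilinear or Gowers-norm estimates, only sieve upper bounds; it is the extra average over $h_1$, fed through Proposition~\ref{thm:fourier}, that does the work and allows $H$ as small as $(\log X)^{\ell+\varepsilon}$. So if your aim were the averaged statement the paper actually proves, the machinery you list would be both heavier than necessary and, as far as is currently known, insufficient to reach this range of $H$.
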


The `pure' cases $k=0$ and $\ell=0$ specialize to the original conjectures of Hardy--Littlewood and of Chowla, respectively. We remark that under the hypothetical assumption of infinitely many Siegel zeros, Conjecture~\ref{conj:HLC} was recently verified for $\ell\leq 2$ by Tao and the second author~\cite{tt-siegel} (generalizing works of Heath-Brown~\cite{hb} and Chinis~\cite{chinis}). In the current paper, we will be concerned with unconditional results. 

Our main result is an averaged form of Conjecture~\ref{conj:HLC} for the genuinely `hybrid' cases $k,\ell\ge1$.

\begin{theorem}[Hardy--Littlewood--Chowla on average]\label{thm_hybrid} Let $\eps>0$ and $k,\ell\geq 1$ be fixed, and let $h_2,\ldots ,h_{k}$, $a_1,\ldots ,a_{\ell}$ be fixed and distinct positive integers. 
\begin{enumerate}
\item[(i)] Let $(\log X)^{\ell+\eps}\leq H\leq \exp((\log X)^{a})$ with $a=a(\varepsilon,\ell)>0$ small enough. Then
\begin{align}\label{eq:hybrid}
\sum_{h_1\le H}\bigg|\sum_{n\leq X}\mu(n+h_1)\cdots \mu(n+h_k)\Lambda(n+a_1)\cdots\Lambda(n+a_{\ell})\bigg| \ \ll \ HX\,\frac{\log \log H}{\log H}.
\end{align}
\item[(ii)] Let $C\geq 1$ be fixed, and let $(\log X)^{\ell+\eps}\leq H\leq (\log X)^{C}$.  There exists an absolute constant $c>0$ such that for any $10^4\ell\varepsilon^{-1}(\log \log H)/(\log H)\leq \delta\leq c/C$, we have 
\begin{align*}
\bigg|\sum_{n\leq X}\mu(n+h_1)\cdots \mu(n+h_k)\Lambda(n+a_1)\cdots\Lambda(n+a_{\ell})\bigg|\leq \delta X    
\end{align*}
for all except $\ll H^{1-c\delta \varepsilon/\ell}$ values of $h_1\leq H$. 
\end{enumerate}
\end{theorem}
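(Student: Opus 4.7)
Write $F(n) := \mu(n+h_2)\cdots\mu(n+h_k)\Lambda(n+a_1)\cdots\Lambda(n+a_\ell)$ and $S(h_1) := \sum_{n \le X}\mu(n+h_1)F(n)$, so that (i) is the assertion $\sum_{h_1 \le H}|S(h_1)| \ll HX\log\log H/\log H$, from which (ii) follows by a Markov-type exceptional-set argument (with the precise exponent $1-c\delta\varepsilon/\ell$ extracted by a careful tracking of the saving). The plan is to decompose each $\Lambda(n+a_j)$ factor via a Heath--Brown identity into M\"obius-type weights, and then to control the resulting shifted M\"obius correlations averaged over $h_1$ by a quantitative averaged Chowla estimate in the spirit of Matom\"aki--Radziwill--Tao.

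Concretely, I apply a Heath--Brown identity of order $K=K(\ell)$ to each $\Lambda(n+a_j)$, with truncation parameter $U$ satisfying $U^\ell \leq (\log X)^{\ell+\varepsilon/2}$. Expansion and a dyadic decomposition of the variables split $\prod_j\Lambda(n+a_j)$ into $(\log X)^{O_\ell(1)}$ pieces, each of \emph{Type~I} (a truncated divisor weight $\prod_j\sum_{d_j\mid n+a_j,\,d_j\leq U}\alpha_{j,d_j}$ with coefficients bounded by logarithms) or \emph{Type~II} (a bilinear form in $\mu$ with both ``Dirichlet variables'' in intermediate ranges). For a Type~I piece, swapping summations and combining the congruences $n\equiv -a_j\pmod{d_j}$ via the Chinese Remainder Theorem reduces the task to bounding
\[
\sum_{h_1\leq H}\Bigl|\sum_{\substack{n\leq X\\ n\equiv r\,(\mathrm{mod}\,q)}}\mu(n+h_1)\mu(n+h_2)\cdots\mu(n+h_k)\Bigr|
\]
by $HX\log\log H/(q\log H)$, uniformly over $q\leq U^\ell$ and $(r,q)=1$. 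This is supplied by a quantitative $k$-point averaged Chowla estimate in progressions; the $\ell$ in the exponent $\ell+\varepsilon$ arises precisely to absorb the modulus $q\leq U^\ell$. For a Type~II piece, Cauchy--Schwarz in the long variable followed by opening of the square reduces to
\[
\sum_{h_1,h_1'\leq H}\sum_{n}\mu(n+h_1)\mu(n+h_1')W(n),
\]
where $W$ is a nonnegative divisor-bounded weight, which is handled by a two-point weighted averaged Chowla bound in the same MRT framework.

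The hardest part, I expect, is the uniformity in the modulus $q$ in the averaged Chowla input: one needs a $k$-point averaged Chowla estimate in arithmetic progressions of modulus $q\leq(\log X)^{\ell+o(1)}$ that is effective with the $\log\log H/\log H$ saving already at $H\geq q\cdot(\log X)^{\varepsilon/2}$, rather than at a polynomial height $H\geq q^{O(1)}$ as would come out of a black-box application of MRT. Establishing such a refinement --- presumably by exploiting the fact that the M\"obius factor $\mu(n+h_1)$ is shifted by the averaged variable, so that $h_1$-averaging effectively becomes averaging of $\mu$ over short intervals of length $H$ --- will constitute the technical heart of the argument, and is what allows the lower bound on $H$ to be sharpened from $(\log X)^{\psi(X)}$ to $(\log X)^{\ell+\varepsilon}$.
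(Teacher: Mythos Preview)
Your approach has a fundamental gap at the very first step. Heath--Brown's identity of fixed order $K$ with truncation parameter $U$ is only valid for $n\leq U^{K}$, so taking $U\leq (\log X)^{1+\varepsilon/(2\ell)}$ while covering $n\leq X$ forces $K\gg (\log X)/\log\log X$; the decomposition then produces far more than $(\log X)^{O_\ell(1)}$ pieces and the combinatorics overwhelm any saving. If instead you take $U=X^{1/K}$ with $K=K(\ell)$ fixed (the only way to keep $K$ bounded), the Type~I divisors $d_j$ range up to a fixed power of $X$, so after CRT the combined modulus $q$ is a power of $X$, not of $\log X$, and an averaged Chowla estimate in progressions to such moduli is completely out of reach. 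There is no intermediate choice of $U$ that makes both ends tractable; the ``hardest part'' you flag is not merely hard but impossible at the scale your decomposition actually produces. (The Type~II side is also problematic: after Cauchy--Schwarz the weight $W$ is a genuine divisor-type function rather than a bounded multiplicative one, and no ``two-point weighted averaged Chowla'' of the required strength is available.)

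The paper proceeds by a completely different, Fourier-analytic route and never decomposes $\Lambda$. A general inequality (Proposition~\ref{thm:fourier}) bounds $\sum_{h\leq H}\bigl|\sum_{n}f(n+h)g(n)\bigr|$ in terms of two inputs: a supremum-over-$\alpha$ short-interval exponential sum bound for $f$, and an $L^{p}$ moment bound $\int_0^1\int_0^X\bigl|\sum_{x\leq n\leq x+2H}g(n)e(n\alpha)\bigr|^{p}\,d\alpha\,dx\ll XH^{p-1}$ for $g$, combined via H\"older with $p=2+2\lceil\ell/\varepsilon\rceil$. With $f=\mu\cdot\1_{\mathcal S}$ and $g(n)=\prod_{i\geq 2}\mu(n+h_i)\prod_j\Lambda(n+a_j)$, the first input is the Matom\"aki--Radziwi\l\l--Tao short exponential sum estimate (Proposition~\ref{thm:MRTChow}), and the second is established directly via Selberg-sieve upper bounds for prime tuples together with an averaged singular-series analysis (Proposition~\ref{lem:Lambda6moment}); the condition $H\geq (\log X)^{\ell p/(p-2)}$ for the moment bound relaxes to $H\geq (\log X)^{\ell+\varepsilon}$ as $p\to\infty$, which is exactly where the exponent $\ell+\varepsilon$ originates. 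Part~(ii) then follows from the quantitative form of this estimate by Markov's inequality, as you correctly anticipated.
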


In particular, \eqref{eq:HLCconj} holds for all but $o(H)$ values of $h_1\leq H$ (and the exceptional set can be taken to be nearly power-saving). 

As is clear from the proof, Theorem~\ref{thm_hybrid} holds equally well with the Liouville function in place of the M\"obius function. 

\begin{remark} It is likely that with some extra effort (in particular, refining Proposition~\ref{thm:MRTChow} for $\mu$ in the spirit of~\cite[Theorem 2.2]{Lichtman}) Theorem~\ref{thm_hybrid}(ii) could be extended to the regime $(\log X)^{\ell+\varepsilon}\leq H\leq X$. However, our main interest here is in taking $H$ as small as possible.  
\end{remark}

Earlier, Matom\"aki, Radziwi\l{}\l{} and Tao~\cite{MRTChow} proved the case $\ell=0$ of Conjecture~\ref{conj:HLC} for almost all $h_1\leq H$, with $H=\psi(X)$ tending to infinity arbitrarily slowly. In~\cite{MRTCor}, they considered the case $k=0$, $\ell=2$ and proved the conjecture for almost all $h_1\leq H$, with $H\geq X^{8/33+\varepsilon}$. The first author~\cite{Lichtman} obtained cancellation in the range $H\geq (\log X)^{\psi(X)}$, implied by the stronger quantitative bound
\begin{align*}
\sum_{h_1,\ldots, h_k\leq H}\bigg|\sum_{n\leq X} \mu(n+h_1)\cdots \mu(n+h_k) & \Lambda(n+a_1)\cdots \Lambda(n+a_{\ell}) \bigg| \\
&\ll \  \frac{H^kX}{\min\big\{\psi(X)^k,(\log X)^{k/3-o(1)}\big\}}.
\end{align*}

The simplest hybrid case $k=\ell=1$ reduces to the M\"obius function on shifted primes. It is a folklore conjecture and a well-known model case for the parity problem in sieve theory  that 
\begin{align*}
 \sum_{p\le X}\mu(p+h) \ = \ o(\pi(X))    
\end{align*}
for any fixed shift $h\neq 0$. This has appeared in print (for $h=1$) at least since Hildebrand~\cite{hildebrand-1989}; see also Pintz~\cite{Pintz} and  Murty--Vatwani~\cite[(1.2)]{MurtyV}. Theorem~\ref{thm_hybrid} directly implies an averaged form of this conjecture for $H\ge (\log X)^{1+\eps}$.

\begin{corollary}\label{cor:shifted} Let $\eps>0$. Then, for $X\geq H\geq (\log X)^{1+\eps}$, we have
\begin{align}\label{eq:musum}
\sum_{p\leq X}\mu(p+h)=o(\pi(X))
\end{align}
for all except $o(H)$ values of $h\leq H$.
\end{corollary}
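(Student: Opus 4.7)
The plan is to deduce Corollary~\ref{cor:shifted} from Theorem~\ref{thm_hybrid}(i) via a routine combination of Markov's inequality and partial summation; I focus on the substantive range $(\log X)^{1+\eps}\leq H\leq \exp((\log X)^a)$ covered by Theorem~\ref{thm_hybrid}(i). Invoking that theorem with $k=\ell=1$ and the fixed shift $a_1=1$ yields
\[
\sum_{h_1\le H}\bigg|\sum_{n\le X}\mu(n+h_1)\Lambda(n+1)\bigg| \ll HX\,\frac{\log\log H}{\log H} = o(HX),
\]
so by Markov's inequality the inner sum is $o(X)$ for all but $o(H)$ values of $h_1\le H$. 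The substitution $n\mapsto n-1$, $h=h_1-1$ (with a negligible $O(\log X)$ boundary correction per $h$) converts this into the statement that, for all but $o(H)$ values of $h\le H$, one has $\sum_{n\le X}\Lambda(n)\mu(n+h)=o(X)$.

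Next, I pass from $\Lambda$-weighted sums to sums over primes. Restricting $\Lambda$ to actual primes costs at most $\ll \sqrt{X}\log X$, so upon setting $S(t;h):=\sum_{p\le t}(\log p)\mu(p+h)$ one has $S(X;h)=o(X)$ for all but $o(H)$ values of $h\le H$; meanwhile, the trivial bound $|S(t;h)|\le \sum_{p\le t}\log p\ll t$ holds uniformly in $h$ and $t$. Partial summation then gives
\[
\sum_{p\le X}\mu(p+h) = \frac{S(X;h)}{\log X}+\int_2^X\frac{S(t;h)}{t(\log t)^2}\,dt.
\]
For $h$ in the good set, the first term is $o(X/\log X)=o(\pi(X))$, and the integral is uniformly bounded by $\int_2^X dt/(\log t)^2\ll X/(\log X)^2=o(\pi(X))$. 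Combining these two bounds yields the corollary.

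I do not anticipate any serious obstacle: the corollary is essentially a direct consequence of Theorem~\ref{thm_hybrid}(i). The one observation worth highlighting is that because the kernel $1/(\log t)^2$ in the partial-summation integral is already sufficiently decaying, the trivial pointwise bound on $S(t;h)$ suffices at every intermediate scale $t$; in particular, there is no need to apply Theorem~\ref{thm_hybrid}(i) at many scales simultaneously and union-bound the exceptional sets, which would otherwise introduce spurious losses of logarithmic size.
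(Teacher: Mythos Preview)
Your argument is correct and is exactly the routine deduction the paper intends when it says that ``Theorem~\ref{thm_hybrid} directly implies'' the corollary; no separate proof is given there, and Markov's inequality followed by partial summation (with the trivial bound $|S(t;h)|\ll t$ handling the integral) is the natural elaboration. Your caveat about the range is apt: Theorem~\ref{thm_hybrid}(i) only reaches $H\le\exp((\log X)^a)$, so to cover the full stated range $H\le X$ one should supplement by citing the earlier result of~\cite{Lichtman} (valid for $H\ge(\log X)^{\psi(X)}$ with any $\psi\to\infty$), which the paper invokes immediately after the corollary.
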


This improves on work of the first author~\cite{Lichtman} that established Corollary~\ref{cor:shifted} when $H\geq (\log X)^{\psi(X)}$ for any function $\psi(X)$ tending to infinity with $X$.

\subsection{Other multiplicative functions}\label{subsec:mult}

We also consider a variant of Conjecture~\ref{conj:HLC} where the occurrences of the M\"obius function are replaced with other 1-bounded multiplicative functions $f_i:\N\to\C$, with $f_1$ not pretending to be a twisted character $\chi(n)n^{it}$ for any $\chi \pmod q$ and $t\in \R$.  Here, following Granville and Soundararajan~\cite{GranSound}, pretentiousness is measured by the pretentious distance
\begin{align*}
\D(f,g;X) = \bigg(\sum_{p\le X}\frac{1-\Re(f(p)\overline{g(p)})}{p}\bigg)^{1/2}
\end{align*}
and the related quantity
\begin{align}
M(f;X,Q) = \inf_{\substack{|t|\le X\\\chi\,(q), \, q\le Q}} \D\big(f,\, n\mapsto n^{it}\chi(n); X\big)^2.
\end{align}
The following generalization of Conjecture~\ref{conj:HLC} is closely related to Elliott's conjecture~\cite{elliott} on correlations of multiplicative functions (in fact, the case $\ell=0$ is precisely Elliott's conjecture), so we shall call it the Hardy--Littlewood--Elliott conjecture.

\begin{conjecture}[Hardy--Littlewood--Elliott]\label{conj:elliott} Let $h_1,\ldots, h_k,a_1,\ldots, a_{\ell}$ be distinct integers. Fix 1-bounded multiplicative functions $f_1,\ldots, f_k$. Suppose that $f_1$ is non-pretentious in the sense that
\begin{align*}
M(f_1;X,Q) \xrightarrow{X\to \infty} \infty  
\end{align*}
for any $Q\geq 1$. Then we have
\begin{align}\label{eq:elliott}
\sum_{n\leq X}f_1(n+h_1)\cdots f_k(n+h_k)\Lambda(n+a_1)\cdots \Lambda(n+a_{\ell}) =o(X).   
\end{align}
\end{conjecture}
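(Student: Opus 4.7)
The plan is to reduce Conjecture~\ref{conj:elliott} to a multilinear Elliott-type correlation estimate for the multiplicative functions $f_1, \ldots, f_k$ themselves, by first stripping off the von Mangoldt factors through a combinatorial identity. Concretely, I would apply Heath--Brown's identity of depth $K \sim 1/\eta$ (for a small parameter $\eta>0$) to rewrite each $\Lambda(n+a_j)$ as a bounded linear combination of Type~I convolutions $\sum_{bc=n+a_j}\alpha_b^{(j)}$ supported on $b\leq X^{\eta}$ and Type~II convolutions $\sum_{bc=n+a_j}\alpha_b^{(j)}\beta_c^{(j)}$ supported on $X^\eta \leq b,c \leq X^{1-\eta}$. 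This transforms the sum in \eqref{eq:elliott} into $O(\log^{O(\ell)} X)$ multilinear expressions of the shape
\begin{align*}
S \ = \ \sum_n f_1(n+h_1)\cdots f_k(n+h_k) \prod_{j=1}^{\ell}\bigg(\sum_{b_j c_j = n+a_j}\alpha_{b_j}^{(j)}\beta_{c_j}^{(j)}\bigg).
\end{align*}

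Next, I would pull the outer $(b_1,\ldots,b_\ell)$ variables outside. For each fixed tuple $(b_1,\ldots,b_\ell)$, the remaining $n$-sum runs over $n$ in the arithmetic progression cut out by $n\equiv -a_j \pmod{b_j}$, and a Cauchy--Schwarz in the $\beta_{c_j}$ coefficients (duplicating the $c_j$ variables in the standard dispersion-method fashion) reduces matters to pure multiplicative-function correlations
\begin{align*}
\sum_{n\in P}f_1(n+h_1)\cdots f_k(n+h_k)\overline{f_1(n+h_1')}\cdots\overline{f_k(n+h_k')}
\end{align*}
over long progressions $P$, with shifts $h_i,h_i'$ depending on the $b_j$'s. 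The nonpretentiousness hypothesis $M(f_1;X,Q)\to\infty$ should then, via Halász's theorem (in the quantitative form of Matomäki--Radziwi\l{}\l{}) together with a strong \emph{unconditional} multilinear version of Elliott's conjecture at fixed shifts, deliver $o(1)$-cancellation in each such correlation, yielding $S=o(X)$ after the Type~I pieces are treated by a dispersion argument that exploits the smoothness of $\alpha_b^{(j)}$ on progressions.

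The main obstacle is the parity problem. Even the simplest hybrid case $k=\ell=1$ already reduces to the folklore assertion $\sum_{p\leq X}f_1(p+h) = o(\pi(X))$ for every fixed $h\neq 0$, a model problem widely regarded as out of reach of current sieve technology. The Cauchy--Schwarz step above destroys precisely the arithmetic rigidity needed to detect parity, and unconditional Elliott--type correlation bounds in the regime of \emph{fixed} shifts are currently unavailable: pretentious theory, Matomäki--Radziwi\l{}\l{} short-interval methods, and Tao's entropy decrement all produce $o(1)$-cancellation in Elliott-type correlations only in \emph{averaged} senses---logarithmic averages, averages over shifts, or averages over moduli---which is exactly why the present paper proves an averaged-over-$h_1$ analogue \eqref{eq:hybrid} rather than the full conjecture. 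A full resolution of Conjecture~\ref{conj:elliott} would therefore seem to require a genuine breakthrough on the parity barrier, beyond anything the combinatorial-identity plus Cauchy--Schwarz strategy sketched above can provide.
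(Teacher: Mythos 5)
You have correctly recognized that the statement you were given is a \emph{conjecture}, not a theorem: the paper states Conjecture~\ref{conj:elliott} (the Hardy--Littlewood--Elliott conjecture) precisely because it is open, and offers no proof of it. What the paper actually establishes is the averaged analogue Theorem~\ref{thm:multsums}, in which the shift $h_1$ is averaged over $h_1\leq H$ for $H\geq (\log X)^{\ell+\varepsilon}$, not the fixed-shift statement \eqref{eq:elliott}. So there is no proof in the paper against which to compare your sketch, and the honest conclusion of your write-up --- that the fixed-shift conjecture runs into the parity barrier and is out of reach --- is exactly right.

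Your discussion of \emph{why} the naive Heath--Brown-plus-dispersion strategy fails is accurate and well targeted: after Cauchy--Schwarz the Type~II pieces reduce to multilinear Elliott correlations at \emph{fixed} shifts, for which no unconditional cancellation is known (logarithmic Chowla and entropy-decrement methods, short-interval results of Matom\"aki--Radziwi\l\l{}, and the Fourier/higher-uniformity results of~\cite{MRTUnif,MRTTZ} all require averaging over shifts, scales, or moduli in an essential way). Your identification of the $k=\ell=1$ case $\sum_{p\le X}f_1(p+h)=o(\pi(X))$ as a model parity problem matches the paper's own remark around Corollary~\ref{cor:shifted}, and your observation that this is precisely why the paper retreats to an average over $h_1$ is a correct reading of the paper's structure. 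In short: no gap in your reasoning --- you correctly concluded that a proof is not available and explained why, which is the best possible response to being handed a conjecture.

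One small point of comparison with the paper's actual method for the averaged statement: the paper does \emph{not} use Heath--Brown or Vaughan decompositions of $\Lambda$ at all. Instead it keeps the product $g(n)=\prod_i f_i(n+h_i)\prod_j\Lambda(n+a_j)$ intact and controls it only through $L^{2m}$ moment bounds for its short exponential sums (Proposition~\ref{lem:Lambda6moment}, via Selberg-sieve upper bounds for prime tuples), then pairs this with Fourier cancellation for $f_1\1_{\mathcal S}$ in short intervals (Proposition~\ref{thm:MRTChow}) through the Fourier-analytic inequality of Proposition~\ref{thm:fourier}. The average over $h_1$ is what turns the fixed-shift parity obstruction into a tractable $\ell^1$--$\ell^\infty$--$\ell^{2m}$ H\"older scheme. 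If you wanted to approach the averaged theorem rather than the conjecture, that is the route to take, and your combinatorial-identity plan would be an unnecessary and likely unmanageable detour.
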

The case $\ell=0$ of this is Elliott's conjecture (in the slightly corrected formulation of~\cite[Conjecture 1.3]{tt-ant}).

We extend our results to non-pretentious multiplicative functions in the regime $H\ge (\log X)^{\ell+\varepsilon}$.

\begin{theorem}[Hardy--Littlewood--Elliott on average]\label{thm:multsums}
Let $\eps>0$, $k,\ell\geq 1$ be fixed, and let $h_2,\ldots, h_k,a_1,\ldots, a_{\ell}$ be fixed and distinct positive integers. Let $(\log X)^{\ell+\varepsilon}\leq H\leq \exp((\log X)^{1/1000})$. Let  $f_1,\ldots, f_k$ be  1-bounded multiplicative functions. Then we have 
\begin{align*}
\sum_{h_1\leq H} &\bigg|\sum_{n\leq X}f_1(n+h_1)\cdots f_k(n+h_k)\Lambda(n+a_1)\cdots \Lambda(n+a_{\ell})\bigg|\\
&\ll HX\left(\exp\big({-M(f_1;X,\min\{(\log X)^{1/125},(\log H)^5\}\big)}/(10000\ell\varepsilon^{-1})\big)+\frac{\log \log H}{\log H}\right).    
\end{align*}
In particular, if $(\log X)^{\ell+\eps} \leq H\leq  \exp((\log X)^{1/1000})$ and
\begin{align*}
 M(f_1;X,\min\{(\log X)^{1/125},(\log H)^5\})\xrightarrow{X\to \infty} \infty,
 \end{align*}
then \eqref{eq:elliott} holds for all except $o(H)$ values of $h_1\leq H$. 
\end{theorem}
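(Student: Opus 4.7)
The plan is to follow the proof of Theorem~\ref{thm_hybrid}(i) essentially verbatim, with the M\"obius functions replaced throughout by the 1-bounded multiplicative functions $f_1, f_2,\ldots, f_k$. The only two properties of $\mu$ actually used in the proof of Theorem~\ref{thm_hybrid}(i) are (a) 1-boundedness, which the $f_j$ also satisfy by hypothesis, and (b) an averaged two-point correlation estimate of Matom\"aki--Radziwi\l{}\l{}--Tao type (Proposition~\ref{thm:MRTChow}) giving cancellation in $\sum_{h\leq H}\bigl|\sum_{n\leq X}\mu(n)\mu(n+h)\bigr|$. Our task therefore reduces to replacing (b) by its analogue for non-pretentious $f_1$, which is precisely what introduces the factor $\exp(-M(f_1;X,Q)/C)$ in the statement.

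More concretely, I would begin with a Heath--Brown (or Vaughan) decomposition expressing each $\Lambda(n+a_i)$ as a combination of $O((\log X)^{O(1)})$ Type~I and Type~II pieces supported at scale $\leq X$. After multiplying out and interchanging orders of summation, the left-hand side of Theorem~\ref{thm:multsums} is reduced to controlling $O((\log X)^{O(\ell)})$ expressions of the form
\[
\sum_{h_1\le H}\bigg|\sum_{n\le X} f_1(n+h_1)\, W(n;h_2,\ldots,h_k)\bigg|,
\]
with $W$ a sieve-type weight whose $L^2$-norm is controlled by a power of $\log X$. Applying Cauchy--Schwarz in $h_1$ and expanding the square (exactly as in the proof of Theorem~\ref{thm_hybrid}(i)) collapses these into an averaged shifted correlation of $f_1$ with itself, weighted by nonnegative factors of total mass $\ll X$.

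The key new ingredient is then an estimate of the shape
\[
\sum_{h\le H}\bigg|\sum_{n\le X} f_1(n)\overline{f_1(n+h)}\bigg| \;\ll\; HX\bigg(\exp\Big(-\tfrac{1}{C}M(f_1;X,Q)\Big) + \frac{\log\log H}{\log H}\bigg)
\]
with $Q = \min\{(\log X)^{1/125},(\log H)^5\}$. This should follow by inserting the quantitative Hal\'asz--Granville--Soundararajan bound on mean values of $f_1$ into the Matom\"aki--Radziwi\l{}\l{} argument for multiplicative functions in short intervals, exactly as in the proof of Proposition~\ref{thm:MRTChow} but with $\mu$-specific minor-arc inputs replaced by Hal\'asz-type bounds controlled by $M(f_1;X,Q)$. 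Once this is in hand, substituting it into the reduction above and summing over all Heath--Brown pieces gives the claimed bound, since the polylogarithmic losses from the decomposition are absorbed by the saving $(\log X)^{-\varepsilon}$ arising from the hypothesis $H\ge (\log X)^{\ell+\varepsilon}$.

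The main obstacle I anticipate is verifying that the specific threshold $Q = \min\{(\log X)^{1/125},(\log H)^5\}$ is large enough to defeat \emph{all} of the minor-arc contributions produced by the Heath--Brown decomposition. The Dirichlet polynomials appearing in the Type~II pieces have lengths ranging up to roughly $X^{1/2}$, and one must track carefully how these lengths interact with the short-interval circle-method argument to match the given $Q$; this propagation of exponents is what is ultimately responsible for the somewhat particular constants $1/1000$ and $1/125$ in the statement of the theorem. By contrast, the passive factors $f_2,\ldots,f_k$ play essentially no analytic role beyond their $1$-boundedness, which is why pretentiousness needs to be excluded only for $f_1$.
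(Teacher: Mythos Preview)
Your proposal diverges sharply from the paper's actual proof and, as written, would not reach the range $H\ge(\log X)^{\ell+\varepsilon}$.

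The paper does \emph{not} decompose the $\Lambda$-factors via Heath--Brown/Vaughan, and it does \emph{not} apply Cauchy--Schwarz in $h_1$ to reduce to an autocorrelation of $f_1$. Instead it writes $f(n)=f_1(n)\1_{\mathcal{S}}(n)$ and $g(n)=\prod_{i\ge 2}f_i(n+h_i)\prod_j\Lambda(n+a_j)$, and applies a Fourier-analytic identity (Proposition~\ref{thm:fourier}) that bounds $\sum_{h\le H}\big|\sum_n f(n+h)g(n)\big|$ by a H\"older combination of (i) the supremum over $\alpha$ of the $L^2$-average of the short exponential sum of $f$ (hypothesis $\mathsf{H}_2$, supplied by Proposition~\ref{thm:MRTChow}), and (ii) the $L^p$-moment in $\alpha$ of the short exponential sum of $g$ (hypothesis $\mathsf{H}_1$, supplied by Proposition~\ref{lem:Lambda6moment}). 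The point is that the $f_1$-saving enters only as $\eta^{1/p}$, while the contribution of $g$ appears through $E_g(X)\asymp 1$ rather than through $\|g\|_2$; no $(\log X)^{\ell/2}$ loss is ever incurred.

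Your Cauchy--Schwarz route runs into exactly the obstruction the paper's introduction warns about: after Cauchy--Schwarz in $h_1$ and expanding, one must pay a factor comparable to $\big((1/X)\sum_{n\le X}|g(n)|^2\big)^{1/2}\asymp (\log X)^{\ell/2}$, whereas the Matom\"aki--Radziwi\l{}\l{}--Tao bound for $f_1$ in intervals of length $H=(\log X)^{O(1)}$ saves only $(\log X)^{-o(1)}$ (or, for general $f_1$, only $\exp(-M(f_1;X,Q)/C)$, which may decay arbitrarily slowly). These savings cannot absorb a genuine power-of-log loss, which is precisely why the earlier argument of \cite{Lichtman} was limited to $H\ge (\log X)^{\psi(X)}$ with $\psi(X)\to\infty$. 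Your claim that ``the polylogarithmic losses from the decomposition are absorbed by the saving $(\log X)^{-\varepsilon}$'' conflates the \emph{hypothesis} $H\ge(\log X)^{\ell+\varepsilon}$ with an \emph{analytic saving}; the hypothesis provides no cancellation in the $f_1$-sum. The paper's new ingredients---the H\"older-based Proposition~\ref{thm:fourier} and the moment bound Proposition~\ref{lem:Lambda6moment} for short prime exponential sums---are what actually circumvent this.
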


\begin{remark}\label{rem:quant}
As a well-known consequence of the Vinogradov--Korobov zero-free region for $L$-functions, for any fixed $\epsilon>0$, $A\geq 1$ we have
\begin{align}\label{eq:dist}\begin{split}
M(\mu; X,(\log X)^{A}) \ &\ge \ \inf_{\substack{|t|\le X\\\chi\,(q), \, q\le Q}}\,\, \sum_{\exp((\log X)^{2/3+\epsilon})\le p\le X}\frac{1+\Re(\chi(p)p^{it})}{p}\\
\ &\ge \ \Big(\frac{1}{3}-\epsilon\Big)\log\log X + O(1).\end{split}
\end{align}
Hence, the bound of Theorem~\ref{thm:multsums} in the case of $f_1=\mu$ simplifies to
\begin{align*}
\sum_{h_1\leq H}\bigg|\sum_{n\leq X}\mu(n+h_1)\cdots f_k(n+h_k)&\Lambda(n+a_1)\cdots \Lambda(n+a_{\ell})\bigg|\\
&\ll HX\left(\frac{\log \log H}{\log H}+(\log X)^{-a}\right)    
\end{align*}
for some constant $a=a(\varepsilon,\ell)>0$. The same holds with $\lambda$ in place of $\mu$, since \eqref{eq:dist} holds equally well for $\lambda$. Restricting to $\exp((\log X)^{a})\geq H\geq (\log X)^{\ell+\eps}$, the bound simplifies to $\ll HX(\log \log H)/\log H$. Hence, Theorem~\ref{thm_hybrid}(i) is a special case of Theorem~\ref{thm:multsums}. The rest of the paper is therefore devoted to the proofs of Theorem~\ref{thm:multsums} and Theorem~\ref{thm_hybrid}(ii).
\end{remark}

Theorem~\ref{thm:multsums} improves on the range $H\geq \exp((\log X)^{5/8+\varepsilon})$ which follows (under a slightly different pretentiousness hypothesis) from Fourier uniformity bounds of Matom\"aki, Radziwi\l\l, Tao, Ziegler and the second author~\cite[Theorem 1.8]{MRTTZ}. See~\cite[Theorem 1.8]{Lichtman} for the details of this implication\footnote{\cite[Theorem 1.8]{Lichtman} relies on~\cite[Theorem 1.4]{MRTUnif} as the Fourier uniformity input, and hence produces a result for $H=X^{\theta}$. Inserting instead~\cite[Theorem 1.12]{MRTTZ} into the argument produces a result for $H=\exp((\log X)^{5/8+\varepsilon})$.}. We also note that the case $\ell=0$ of Conjecture~\ref{conj:elliott} was proven on average by Matom\"aki--Radziwi\l{}\l{}--Tao~\cite{MRTChow} in the regime $H=\psi(X)\to \infty$.

\subsection{Outline of the proof}

The proof method of Theorem~\ref{thm_hybrid} (and Theorem~\ref{thm:multsums}) is somewhat different from that of~\cite{Lichtman} (or~\cite{MRTChow}). Both proofs begin with Fourier-analytic identities, namely~\cite[Lemma 2.1]{Lichtman} and Proposition~\ref{thm:fourier} below. The latter allows one to study mixed 2-point correlations on average, for some functions $f$ and $g$ which satisfy certain technical hypotheses. Crucially, in Proposition~\ref{thm:fourier} having qualitative cancellation for the exponential sum of $f$ on average over short intervals $[x,x+H]$ suffices, whereas in~\cite[Lemma 2.1]{Lichtman} one needs to save a factor comparable to $((1/X)\sum_{n\leq X}|g(n)|^2)^{-1/2}$ (but no additional hypotheses were required of $f, g$). For intervals of length $H=(\log X)^{O(1)}$, the bound in~\cite{MRTChow} for the exponential sum  of a multiplicative $f$ on average over short intervals $[x,x+H]$ saves less than $(\log X)^{-o(1)}$, which is problematic if the mean value of $|g|^2$ is a power of $\log X$, so the weakening of the Fourier assumption on $f$ in Proposition~\ref{thm:fourier} is important for us.

One ultimately applies Proposition~\ref{thm:fourier} for the choices $f = \mu$ (say) and $g(n) = \mu(n + h_2)\cdots \mu(n+h_k)\Lambda(n+a_1)\cdots \Lambda(n+a_\ell)$, in which case verifying the technical hypotheses may be reduced to the following problems:

\begin{enumerate}
\item[(I)] cancellation in short exponential sums of $\mu$ on average,
\item[(II)] moment estimates for short exponential sums over primes, i.e. for `typical' $x$,
\begin{align*}
\int_{0}^1\bigg|\sum_{x\leq n\leq x+H}\Lambda(n)e(\alpha n)\bigg|^{2m}\dd{\alpha} \ \ll \ H^{2m-1}.
\end{align*}
\end{enumerate}

Here (I) is deduced from the work of Matom\"aki--Radziwi\l{}\l{}--Tao~\cite{MRTChow}, with special care paid to the shape of the error terms in order to obtain a power-saving error term in Theorem~\ref{thm_hybrid}(ii). This is carried out in Proposition~\ref{thm:MRTChow}.

For (II), the intervals involved are too short for sieve methods to provide us with a strong enough pointwise bound on these moments. However, on average over $x$, in Proposition~\ref{lem:Lambda6moment} we show that the moments above are of the expected order of magnitude by applying upper bounds for prime tuples coming from Selberg's sieve, and by performing an analysis of the resulting singular series.

The method presented is rather flexible, and could be applied (with some additional effort) to other correlation problems as well, such as the sums
\begin{align*}
 \sum_{n\leq X}\mu(n+h)\Lambda(P(n))\quad    \textnormal{or} \quad \sum_{n\leq X}\mu(n+h)\1_{\exists a,b\in\mathbb{Z} \; :\; n=a^2+b^2}
\end{align*}
on average over $h$ (with $P(Y)\in \mathbb{Z}[Y]$). We leave the details of such generalizations to the interested reader.

\subsection{Acknowledgments}

The first author was supported by a Clarendon Scholarship.
The second author was supported by a Titchmarsh Fellowship and Academy of Finland grant no. 340098. The authors would like to thank the anonymous referee for detailed feedback and corrections.

\section{Auxiliary results}

\subsection{A short exponential sum estimate}

As in~\cite{MRTChow}, we introduce a set $\mathcal{S}_{P_1,Q_1,X}$ (depending on parameters $10<P_1<Q_1\leq X$) consisting of the ``typical'' numbers having prime factors from certain long ranges. More precisely, $\mathcal{S}_{P_1,Q_1,X}$ is the set of positive integers having a prime factor in each of the ranges $[P_j,Q_j]$, $j=1,\ldots, J$, where for $j>1$ we define
\begin{align*}
P_j &=\exp(j^{4j}(\log Q_1)^{j-1}(\log P_1)),\\
Q_j &=\exp(j^{4j+2}(\log Q_1)^j),   
\end{align*}
and $J$ is the largest index such that $Q_J\leq \exp(\sqrt{\frac{1}{2}\log X})$. If $\mathcal{S}_{P_1,Q_1,X}^{c}$ denotes the complement of $\mathcal{S}_{P_1,Q_1,X}$ in $\mathbb{N}\cap [1,X]$, then the fundamental lemma of sieve theory~\cite[Lemma 6.17]{Opera} tells us that
\begin{align}\label{eq_Sbound}
\big|\mathcal{S}_{P_1,Q_1,X}^{c}\big|\ll \sum_{j\leq J}\frac{\log P_j}{\log Q_j}X\ll \frac{\log P_1}{\log Q_1}X.    
\end{align}

We then have the following exponential sum estimate (with the supremum outside) for short sums of multiplicative functions, which follows from~\cite[Theorem 2.3]{MRTChow}. 

\begin{proposition} \label{thm:MRTChow}
Let $3\leq H\leq \exp((\log X)^{1/1000})$ and
\begin{align*}
\frac{\log\log H}{\log H} \leq \ \delta \ \leq \frac{1}{2000}.
\end{align*}
Let $f$ be a 1-bounded multiplicative function. Then there exist $10<P_1<Q_1\leq X$ such that 
\begin{align}\label{eq:P1Q1}
\frac{\log P_1}{\log Q_1}\ll \delta +\exp(-M(f;X,Q)/2000)
\end{align}
and such that for $\mathcal{S}=\mathcal{S}_{P_1,Q_1,X}$ we have
\begin{align}\label{eq:fourier}
\sup_{\alpha}\int_0^X & \bigg|\sum_{x\le n\le x+H}f(n)\1_{\mathcal{S}}(n) e(n\alpha)\bigg|\dd{x}\ll \bigg(\exp(-M(f;X,Q)/2000)+H^{-\delta}\bigg)HX,
\end{align}
where $Q= \min\{(\log X)^{1/125}, (\log H)^5\}$.
\end{proposition}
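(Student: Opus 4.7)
The plan is to deduce Proposition~\ref{thm:MRTChow} essentially directly from~\cite[Theorem 2.3]{MRTChow}, which already provides a short exponential sum bound of this shape for 1-bounded multiplicative functions restricted to the sieved set $\mathcal{S}_{P_1,Q_1,X}$. The two tasks are therefore (a) to choose the sieve parameters $P_1<Q_1$ so that~\eqref{eq:P1Q1} holds, and (b) to extract the explicit error shape $\exp(-M(f;X,Q)/2000)+H^{-\delta}$ with the stated $Q=\min\{(\log X)^{1/125},(\log H)^5\}$, which is stronger than the qualitative cancellation recorded in~\cite{MRTChow}. Achieving (b) in this quantitative form is what will ultimately allow the power-saving exceptional set in Theorem~\ref{thm_hybrid}(ii).

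For (a), I would fix $Q_1$ near the top of the allowable range, say $Q_1=\exp((\log X)^b)$ for a small absolute constant $b>0$, and then solve for $P_1$ so that $\log P_1/\log Q_1\asymp \delta+\exp(-M(f;X,Q)/2000)$. The fundamental lemma bound~\eqref{eq_Sbound} ensures that discarding integers $n\notin \mathcal{S}$ costs at most $O\!\left(HX\log P_1/\log Q_1\right)$, which is consistent with the target. The threshold $Q=\min\{(\log X)^{1/125},(\log H)^5\}$ reflects two constraints visible in~\cite[Theorem 2.3]{MRTChow}: the exponent $1/125$ is the natural scale at which the short-interval Matom\"aki--Radziwi\l\l\ machinery furnishes savings when $H$ is as large as $\exp((\log X)^{1/1000})$, while $(\log H)^5$ is the largest Dirichlet modulus the major arc analysis can resolve on intervals of length $H$.

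For (b), I would follow the major/minor arc decomposition used in~\cite{MRTChow}: on minor arcs in $\alpha$, the short-interval mean value theorem of Matom\"aki--Radziwi\l\l\ yields the $H^{-\delta}$ saving in the stated range $(\log\log H)/\log H\leq \delta\leq 1/2000$; on major arcs $\alpha\approx a/q+t/X$ with $q,|t|\leq Q$, the exponential sum factors through a correlation of $f$ with $n^{it}\chi(n)$, and a Hal\'asz-type bound combined with the definition of $M(f;X,Q)$ produces the $\exp(-M(f;X,Q)/2000)$ contribution. Both estimates are uniform in $\alpha$, which is what allows the supremum over $\alpha$ to be pulled outside the integral in~\eqref{eq:fourier}.

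The main obstacle is that~\cite[Theorem 2.3]{MRTChow} is stated in a more qualitative form, so one must trace through its proof and book-keep the numerical constants arising from the Hal\'asz and Lipschitz estimates, as well as from the iterative sieve construction of $\mathcal{S}_{P_1,Q_1,X}$. A secondary point is ensuring that the restriction to $n\in\mathcal{S}$ does not artificially inflate the pretentious distance parameter: since $\mathcal{S}$ is defined by membership of prime factors in the long ranges $[P_j,Q_j]$ rather than by any multiplicative twist, this is compatible with the Hal\'asz input at the scale $Q=\min\{(\log X)^{1/125},(\log H)^5\}$, provided the chosen constants ($1/125$, $5$, $2000$, and the small $b$ above) are mutually consistent --- which can be verified by inspection once the parameter choices in (a) are fixed.
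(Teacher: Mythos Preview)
Your overall instinct---that this is a direct consequence of~\cite[Theorem 2.3]{MRTChow}---is right, but the execution plan diverges from what actually works in several concrete ways.

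First, \cite[Theorem 2.3]{MRTChow} is already fully quantitative: it gives a saving of order $(\log H)^{1/4}(\log\log H)/W^{1/4}$ for a parameter $W$ subject to the constraints $W\geq (\log H)^5$, $W\leq \min\{H^{1/250},(\log X)^{1/125}\}$, and $W\leq \exp(M(f;X,Q)/3)$. There is no need to reopen the major/minor arc argument or re-derive Hal\'asz-type bounds; the entire content of the proof is the \emph{choice of $W$} and the verification that these constraints can be met. Your ``main obstacle'' is therefore not an obstacle at all.

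Second, you do not get to choose $P_1,Q_1$ freely. In~\cite[Theorem 2.3]{MRTChow} they are fixed as $P_1=W^{200}$ and $Q_1=H/W^3$, so $Q_1$ is of size roughly $H$, not $\exp((\log X)^b)$. Your proposed choice $Q_1=\exp((\log X)^b)$ is both incompatible with the black-box input and unnecessary; with the prescribed values one has $\log P_1/\log Q_1\ll (\log W)/(\log H)$, which already gives~\eqref{eq:P1Q1} once $W$ is set.

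Third, and most substantively, you are missing the one genuine wrinkle in the argument: when $M(f;X,Q)$ is small the lower bound $W\geq (\log H)^5$ can conflict with the upper bound $W\leq \exp(M(f;X,Q)/3)$, so the theorem cannot be applied at scale $H$. The paper handles this by a case split. If $(\log H)^6\leq \exp(M(f;X,Q)/300)$ one takes $W=\min\{H^{6\delta},\exp(M(f;X,Q)/300)\}$ and applies the theorem directly. Otherwise one defines $H_1$ by $(\log H_1)^6=\exp(M(f;X,Q)/300)$, splits $[x,x+H]$ into subintervals of length $\asymp H_1$, and applies the theorem on each piece with $W=(\log H_1)^6$; the error is then governed by $\exp(-M(f;X,Q)/2000)$ as claimed. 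This splitting trick is the actual technical content of the proof and is absent from your plan.
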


\begin{remark} 
Applying \eqref{eq:dist}, and assuming that $H\leq (\log X)^C$, the bound \eqref{eq:fourier} in the case of $f=\mu$ simplifies to
\begin{align}\label{eq:fourierb}
\sup_{\alpha}\int_0^X & \bigg|\sum_{x\le n\le x+H}\mu(n)\1_{\mathcal{S}}(n) e(n\alpha)\bigg|\dd{x}\ll H^{1-\delta}X
\end{align}
for some $\delta=\delta(C)>0$, and the same holds with $\lambda$ in place of $\mu$. Moreover, \eqref{eq:P1Q1} then simplifies to $(\log P_1)/(\log Q_1)\ll \delta$.  
\end{remark}

\begin{proof}
Without loss of generality, we may assume that $H\geq H_0$ for a large enough constant $H_0$.
Denote the integral in \eqref{eq:fourier} by $J_f$. Suppose first that
\begin{align*}
(\log H)^{6}\leq \exp(M(f;X,Q)/300).     
\end{align*}
Let
\begin{align*}
W=\min\{H^{6\delta},\exp(M(f;X,Q)/300)\}, 
\end{align*}
and let $P_1=W^{200}$, $Q_1=H/W^{3}$ be as in~\cite[Theorem 2.3]{MRTChow}. Note that $W\ll (\log X)^{1/150}$ by the fact that $M(f;X,Q)\leq 2\log \log X+O(1)$. Note also that $\frac{\log P_1}{\log Q_1}\ll \frac{\log W}{\log H}\ll \delta$, and that all the conditions for $W$ in~\cite[Theorem 2.3]{MRTChow} are satisfied (that is, $W\geq (\log H)^5$ and $W\leq \min\{H^{1/250},(\log X)^{1/125}\}$ and $W\leq \exp(M(f;X,Q)/3)$), so we obtain
\begin{align*}
J_f \ll \frac{(\log H)^{1/4}(\log \log H)}{W^{1/4}}HX\ll (H^{-\delta}+\exp(-M(f;X,Q)/2000))HX.     
\end{align*}

Suppose then that
\begin{align*}
\exp(M(f;X,Q)/300):=(\log H_1)^6<(\log H)^6.     
\end{align*}
Let
\begin{align}
W=(\log H_1)^6=\exp(M(f;X,Q)/300),    
\end{align}
and let $P_1=W^{200}$, $Q_1=H/W^3$ as before. 
We may assume $H_1\geq 1000$, since otherwise the claim is trivial. Now, by noting that $\frac{\log P_1}{\log Q_1}\ll \exp(-M(f;X,Q)/2000)$ and splitting the sum on $H$ in \eqref{eq:fourier} into shorter sums of length in $[H_1/4,H_1/2]$, it suffices to prove that
\begin{align*}
\sup_{\alpha}\int_0^X & \bigg|\sum_{x\le n\le x+H'}f(n)\1_{\mathcal{S}}(n) e(n\alpha)\bigg|\dd{x}\ll \exp(-M(f;X,Q)/2000)H'X,
\end{align*}
uniformly for $H'\in [H_1/4,H_1/2]$. Since the conditions for $W$ in~\cite[Theorem 2.3]{MRTChow} are satisfied, we can again apply that theorem to obtain the desired bound (noting that $\frac{(\log H_1)^{1/4}(\log \log H_1)}{W^{1/4}}\ll \exp(-M(f;X,Q)/2000)$ in this situation). This completes the proof. 
\end{proof}

\subsection{Upper-bounding correlations of primes}

The goal of this subsection is to prove  Proposition~\ref{lem:Lambda6moment} below, which gives optimal bounds for even moments of the short exponential sums associated with correlations of the von Mangoldt function. We first need a few lemmas on tuples of primes and averages of singular series.  

Recall for a tuple $\mathcal H$ of integers, $\nu_{p}(\mathcal{H}) = |\{h\pmod{p} : h\in\mathcal H\}|$. A well-known application of Selberg's sieve upper bounds the number of $k$-tuples of primes, for fixed $k$.

\begin{lemma} \label{Selbtuple}
Let $k\geq 1$ be fixed, let $X\geq X_0(k)$, and suppose that $h_1,\ldots, h_k\in[-X,X]$ are distinct integers. Denote $\mathcal H = \{h_1,\ldots,h_k\}$. Then we have
\begin{align*}
\big|\{n\le X \; : \; n+h_1,\ldots, n+h_k\in\mathbb P\}\big| \ \le \ k!\cdot 2^k\, \frac{\mathfrak{S}(\mathcal H)X}{(\log X)^k}\Big(1 + O\Big(\frac{\log\log X}{\log X}\Big)\Big),
\end{align*}
where the singular series is given by
\begin{align*}
\mathfrak{S}(\mathcal H) := \prod_p \Big(1-\frac{\nu_{p}(\mathcal{H})}{p}\Big)\Big(1-\frac{1}{p}\Big)^{-k}.
\end{align*}
\end{lemma}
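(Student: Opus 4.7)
The plan is a standard Selberg upper bound sieve, with careful tracking of constants to obtain the prefactor $k!\cdot 2^k$ and the $O(\log\log X/\log X)$ error. First I would set up the sifting problem: take $z=X^{1/2}/(\log X)^{A}$ with $A=A(k)$ a large constant, and consider the sifted quantity
\[
S(\mathcal{A},z) \ := \ \big|\{n\le X : \gcd(\textstyle\prod_{i\le k}(n+h_i),\,P(z))=1\}\big|,
\]
where $P(z)=\prod_{p\le z}p$. Discarding the $O(kz)$ values of $n$ for which some $n+h_i\le z$, this upper-bounds the prime-tuple count in the lemma.

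Next I would apply Selberg's $\Lambda^2$-sieve (as in \cite[Theorem 7.1]{Opera}) with sieve density $\omega(p)=\nu_p(\mathcal{H})$ and level $D=z^2$, obtaining
\[
S(\mathcal{A},z)\ \le\ \frac{X}{V(z)}\ +\ R, \qquad V(z)\ :=\ \sum_{\substack{d<z\\ d \textnormal{ sq-free}}}\ \prod_{p\mid d}\frac{\nu_p(\mathcal{H})}{p-\nu_p(\mathcal{H})}.
\]
Here the remainder satisfies $R\ll \sum_{d\le D}3^{\omega_0(d)}|r_d|$ with $|r_d|\le \prod_{p\mid d}\nu_p(\mathcal{H})\ll k^{\omega_0(d)}$, so $R\ll D(\log X)^{O(k)}\ll X/(\log X)^{k+2}$ once $A$ is chosen large enough in terms of $k$, which is admissible.

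The core of the argument is the asymptotic
\[
V(z)\ =\ \frac{(\log z)^k}{k!\,\mathfrak{S}(\mathcal{H})}\bigg(1+O\Big(\frac{\log\log z}{\log z}\Big)\bigg).
\]
The multiplicative function $g(d)=\prod_{p\mid d}\nu_p/(p-\nu_p)$ has Dirichlet series $G(s)=\prod_p(1+g(p)/p^s)$ which factors as $\zeta(s+1)^k\cdot H(s)$, with $H$ analytic for $\Re s>-1/2$ and $H(0)=\prod_p(1-1/p)^k(1+g(p))=\mathfrak{S}(\mathcal{H})^{-1}$. A standard contour shift (Selberg--Delange, or a direct $k$-dimensional Mertens manipulation) then yields the stated mean value; the error is uniform in $\mathcal{H}\subset[-X,X]$ because the local factors differ from the generic shape only at the $O(\log X)$ primes dividing $\prod_{i<j}(h_i-h_j)$.

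Finally, substituting $z=X^{1/2}/(\log X)^{A}$ gives $(\log z)^{-k}=2^k(\log X)^{-k}(1+O(\log\log X/\log X))$, and combining with the $V(z)$-asymptotic yields the claimed upper bound. The one place where some care is required is securing the $O(\log\log z/\log z)$ error in the $V(z)$-asymptotic uniformly in $\mathcal{H}$; once this is in hand (and it is classical), the remaining steps are mechanical.
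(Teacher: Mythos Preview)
Your sketch is correct and is precisely the standard Selberg $\Lambda^2$-sieve argument that underlies the result the paper invokes; the paper itself gives no argument but simply cites \cite[Theorem~7.16]{Opera}. In particular your identification of the constant $k!\cdot 2^k$ via $V(z)\sim (\log z)^k/(k!\,\mathfrak{S}(\mathcal H))$ and $\log z=\tfrac12\log X+O(\log\log X)$, together with the $O(\log\log X/\log X)$ uniformity coming from the $O(\log\log X)$ primes dividing $\prod_{i<j}(h_i-h_j)$, matches exactly how that theorem is proved in \cite{Opera}.
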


\begin{proof}
This is~\cite[Theorem 7.16]{Opera}.
\end{proof}

We have a trivial upper bound for the values of the singular series, which we will need in what follows. 

\begin{lemma}\label{Singularbound}
Let $k\geq 1$, and let $\mathcal{H}=\{h_1,\ldots, h_k\}$ be a tuple of distinct integers. Then 
\begin{align*}
\mathfrak{S}(\mathcal{H})\ll_k \prod_{1\leq i<j\leq k}\Big(\frac{|h_i-h_j|}{\varphi(|h_i-h_j|)}\Big)^k.
\end{align*}
\end{lemma}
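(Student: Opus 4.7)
The plan is to estimate the Euler product for $\mathfrak{S}(\mathcal{H})$ by splitting primes into those where the residues $h_1,\ldots,h_k$ are all distinct mod $p$ and those where they are not. The second set is exactly the set of primes dividing some difference $|h_i-h_j|$, and this is where all of the contribution beyond $O_k(1)$ lives.

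First I would handle the ``generic'' primes, namely those for which $\nu_p(\mathcal{H})=k$ (automatically requiring $p\ge k$). For such $p$, a direct expansion gives
\[
\frac{1-k/p}{(1-1/p)^k} \ = \ 1 + O_k(1/p^2),
\]
so the sub-product over these primes converges and is $O_k(1)$. This uses only that $k$ is fixed.

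Next I would handle the ``exceptional'' primes $p\mid \prod_{i<j}|h_i-h_j|$. Here $\nu_p(\mathcal{H})\ge 1$, so I apply the trivial bound
\[
\frac{1-\nu_p(\mathcal{H})/p}{(1-1/p)^k} \ \le \ (1-1/p)^{-k}.
\]
Multiplying over these primes and using the identity $n/\varphi(n)=\prod_{p\mid n}(1-1/p)^{-1}$, I obtain
\[
\prod_{p\mid N}(1-1/p)^{-k} \ = \ \bigl(N/\varphi(N)\bigr)^k, \qquad N:=\prod_{i<j}|h_i-h_j|.
\]
Since $n\mapsto n/\varphi(n)$ depends only on the prime support, $N/\varphi(N)\le \prod_{i<j}|h_i-h_j|/\varphi(|h_i-h_j|)$ (any prime dividing several differences is counted only once on the left), and this gives the required
\[
\mathfrak{S}(\mathcal{H}) \ \ll_k \ \prod_{1\le i<j\le k}\Bigl(\frac{|h_i-h_j|}{\varphi(|h_i-h_j|)}\Bigr)^{k}.
\]

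There is no real obstacle in this proof; it is a routine Euler-product estimation. The only mild care needed is that small primes $p\le k$ fall automatically into the ``exceptional'' part (since $\nu_p(\mathcal{H})<k$ is forced by $p<k$), and are thus absorbed via the trivial bound $(1-1/p)^{-k}$ alongside the primes dividing the differences, with no separate treatment required.
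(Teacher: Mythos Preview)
Your proof is correct and follows essentially the same approach as the paper: both split primes according to whether $\nu_p(\mathcal{H})=k$ or $\nu_p(\mathcal{H})<k$, observe that the latter set is exactly the primes dividing some $h_i-h_j$, bound those factors trivially by $(1-1/p)^{-k}$, and note that the product over the former primes is $O_k(1)$. Your handling of the generic primes via $1+O_k(1/p^2)$ is slightly more explicit than the paper's (which simply writes the constant as $\prod_{p>k}(1-k/p)(1-1/p)^{-k}$), and your remark that small primes $p<k$ are automatically exceptional is a point the paper leaves implicit; otherwise the arguments coincide.
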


\begin{proof}
Note that $\nu_{p}(\mathcal{H})\le k$, and that if $\nu_{p}(\mathcal{H})<k$ then $p$ must divide $h_i-h_j$ for some $i<j$. Hence
\begin{align*}
\mathfrak{S}(\mathcal{H}) &\le \prod_{\nu_{p}(\mathcal{H})=k} \left(1-\frac{k}{p}\right)\left(1-\frac{1}{p}\right)^{-k}\prod_{\nu_{p}(\mathcal{H})<k}\left(1-\frac{1}{p}\right)^{-k}\\
&\le \prod_{p>k}\left(1-\frac{k}{p}\right)\left(1-\frac{1}{p}\right)^{-k}\prod_{1\leq i<j\leq k}\prod_{p\mid h_i-h_j}\left(1-\frac{1}{p}\right)^{-k}\ll_k \prod_{1\leq i<j\leq k}\Big(\frac{|h_i-h_j|}{\varphi(|h_i-h_j|)}\Big)^k, 
\end{align*}
as claimed.
\end{proof}

We will also need an upper bound for the autocorrelations of $h/\varphi(h)$. 

\begin{lemma}\label{lemma_eulerphi}
Let $H\geq 1$ and let $C,k,r\geq 1$ be fixed. Then we have\footnote{In what follows, we interpret $0/\varphi(0)$ as $0$, say.}
\begin{align*}
\sum_{h\leq H}\prod_{i=1}^r\left(\frac{|L_i(h)|}{\varphi(|L_i(h)|)}\right)^k
\ll_{r,k,C} H
\end{align*}
uniformly for any linear functions $L_i(h)=a_ih+b_i$ with $a_i,b_i\in [-C,C]\cap\Z$.
\end{lemma}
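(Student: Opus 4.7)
The plan is to reduce to the case of a single linear form via H\"older's inequality, and then estimate the resulting one-dimensional sum using a Dirichlet convolution identity for $(n/\varphi(n))^K$. First, whenever $a_i=0$ for some $i$, the factor $|b_i|/\varphi(|b_i|)$ is a constant in $[0,\,O_C(1)]$ that can be pulled outside the sum (contributing $0$ if $b_i=0$), so I may assume $a_i\neq 0$ for every $i$.

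Next, applying H\"older's inequality with exponents $r,\ldots,r$ gives
\begin{align*}
\sum_{h\leq H}\prod_{i=1}^r\bigg(\frac{|L_i(h)|}{\varphi(|L_i(h)|)}\bigg)^k \leq \prod_{i=1}^r\bigg(\sum_{h\leq H}\bigg(\frac{|L_i(h)|}{\varphi(|L_i(h)|)}\bigg)^{kr}\bigg)^{1/r},
\end{align*}
which reduces the claim to proving $\sum_{h\leq H}(|L(h)|/\varphi(|L(h)|))^K \ll_{K,C} H$ uniformly for any single linear form $L(h)=ah+b$ with $a,b\in[-C,C]\cap\Z$, $a\neq 0$, and exponent $K=kr$. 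As $h$ varies over $[1,H]\cap\Z$, the value $n=L(h)$ sweeps out at most $H$ integers in an arithmetic progression of common difference $|a|$ lying inside $[-CH-C,CH+C]$, so it suffices to bound the one-variable sum $\sum_{0<|n|\leq CH+C,\,n\equiv b\Mod{|a|}}(|n|/\varphi(|n|))^K$.

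The next step is to invoke the classical identity $(n/\varphi(n))^K=\sum_{d\mid n}f_K(d)$, where $f_K$ is multiplicative, supported on squarefree integers, with $f_K(p)=(p/(p-1))^K-1=O_K(1/p)$. Exchanging summations and counting the integers in a double arithmetic progression by the Chinese remainder theorem (so the inner count is $\ll H\gcd(d,|a|)/(d|a|)+1$), the sum splits into a main term
\begin{align*}
\frac{H}{|a|}\sum_d \frac{f_K(d)\gcd(d,|a|)}{d}=\frac{H}{|a|}\prod_p\bigg(1+\frac{f_K(p)\gcd(p,|a|)}{p}\bigg)
\end{align*}
plus an error $\sum_{d\leq CH+C}f_K(d)$. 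The Euler product converges to $O_{K,C}(1)$: the factors at $p\nmid |a|$ are $1+O_K(1/p^2)$, while the at most $O_C(1)$ factors at $p\mid |a|$ are each bounded by $(p/(p-1))^K=O_{K,C}(1)$. Since $|a|\geq 1$, the main term is $\ll_{K,C} H$, while the error is $\ll_K(\log H)^{O_K(1)}$ by Mertens-type bounds and hence negligible.

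The main obstacle I anticipate is ensuring uniformity in the coefficients $a,b$, but the bound $|a|,|b|\leq C$ makes this automatic: it controls both the finitely many small primes that could cause loss in the Euler product and, via the factor $1/|a|$ in the main term, prevents any blow-up from the coarser AP density when $|a|$ is small.
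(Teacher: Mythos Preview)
Your proof is correct and follows essentially the same approach as the paper: both apply H\"older's inequality to reduce to a single linear form with exponent $K=kr$, and then bound the resulting one-variable sum via a Dirichlet convolution identity for $(n/\varphi(n))^K$. The only minor difference is that the paper, after H\"older, simply bounds $\sum_{h\leq H}(|L(h)|/\varphi(|L(h)|))^K$ by $\sum_{n\leq (C+1)H}(n/\varphi(n))^K$ using injectivity of $L$, whereas you retain the arithmetic-progression structure modulo $|a|$; both routes lead to the same convergent Euler product and the bound $\ll_{K,C} H$.
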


\begin{proof} First note that for $H'\geq 1$ and any integer $m\geq 1$, we have
\begin{align}\label{eq:heulerphi}
\sum_{h\leq H'}\left(\frac{h}{\varphi(h)}\right)^{m} \ &= \ \sum_{h\leq H'} \bigg(\sum_{d\mid h}\frac{1}{d}\bigg)^{m} =\sum_{d_1,\ldots, d_m\geq 1} \frac{1}{d_1\cdots d_m}\sum_{\substack{h\leq H'\\d_i\mid h\;\forall i\le m}}1 \nonumber\\
&\ll H'\sum_{d_1,\ldots, d_m\geq 1}\frac{1}{d_1\cdots d_m[d_1,\ldots, d_m]} \nonumber\\
&\ll H'\prod_{p_1,\ldots, p_m}\left(1+\sum_{\substack{j_1,\ldots, j_m\geq 0\\(j_1,\ldots, j_m)\neq (0,\ldots, 0)}}\frac{1}{p_1^{j_1}\cdots p_m^{j_m}[p_1^{j_1},\ldots, p_m^{j_m}]}\right) \ \ll_m \ H'.
\end{align}
Then, by H\"older's inequality and \eqref{eq:heulerphi} with $H'=(C+1)H$, we have
\begin{align*}
\sum_{h\leq H}\prod_{i\le r}\left(\frac{|L_i(h)|}{\varphi(|L_i(h)|)}\right)^k
&\leq \prod_{i\le r}  \left(\sum_{h\leq H} \left(\frac{|L_i(h)|}{\varphi(|L_i(h)|)}\right)^{rk}\right)^{1/r}\\
&\leq \prod_{i\le r}  \left(\sum_{h\leq (C+1)H} \left(\frac{h}{\varphi(h)}\right)^{rk}\right)^{1/r} \ \ll_{r,k,C} \ H,
\end{align*}
using the fact that $L_i(\cdot)$ takes any integer value at most once.
\end{proof}

The previous lemmas lead to the following corollary.

\begin{corollary}\label{cor:singular} Let $k\geq m\geq 1$ be fixed. Then 
\begin{align*}
\sum_{\mathbf{h}\in [0,H]^m}\sup_{L_1,\ldots,L_k}\mathfrak{S}(\{L_1(\mathbf{h}),\ldots, L_k(\mathbf{h})\}) \ \ll_{k,A} \ H^m,
\end{align*}
where the supremum ranges over affine-linear forms $L_j(h_1,\ldots,h_m) = a_j+\sum_{i=1}^ma_{i,j}h_i$ with integer coefficients and with $|a_{i,j}|,|a_j|\le A$.
\end{corollary}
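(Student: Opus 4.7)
The plan has three stages. First I would reduce the supremum to a single fixed tuple by counting: since each coefficient $a_j, a_{i,j}$ is an integer in $[-A,A]$, there are only $(2A+1)^{k(m+1)} = O_{k,m,A}(1)$ possible tuples $(L_1,\ldots,L_k)$, so bounding $\sup \leq \sum$ costs only a constant factor, and it suffices to prove
$$\sum_{\mathbf h\in[0,H]^m}\mathfrak S(\{L_1(\mathbf h),\ldots,L_k(\mathbf h)\})\ll_{k,A}H^m$$
for an arbitrary fixed tuple.

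Second, I would apply Lemma~\ref{Singularbound} to convert the singular series into a product of factors $(|M_{ij}(\mathbf h)|/\varphi(|M_{ij}(\mathbf h)|))^k$, where $M_{ij}:=L_i-L_j$ is an affine-linear form with integer coefficients in $[-2A,2A]$. Strictly, Lemma~\ref{Singularbound} requires the values $L_i(\mathbf h)$ to be pairwise distinct; the exceptional $\mathbf h$ (where some $L_i(\mathbf h)=L_j(\mathbf h)$) lie on a union of at most $\binom{k}{2}$ affine hyperplanes and hence form a set of size $O_k(H^{m-1})$, on which the trivial Mertens-type estimate $\mathfrak S \ll_k (\log H)^{O_k(1)}$ contributes only $o(H^m)$. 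The case where some $L_i\equiv L_j$ as polynomials reduces to a lower value of $k$ and may be handled by induction on $k$.

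Third, I would bound $\sum_{\mathbf h \in [0, H]^m}\prod_{1\le i<j\le k}(|M_{ij}(\mathbf h)|/\varphi(|M_{ij}(\mathbf h)|))^k$ by peeling off one coordinate at a time. For each pair $(i,j)$, let $u_{ij}\in\{1,\ldots,m\}$ be the largest index on which $M_{ij}$ depends nontrivially (if $M_{ij}$ is a nonzero constant, its factor is $O_{k,A}(1)$ and may be discarded). Summing first over $h_m$, only factors with $u_{ij}=m$ depend on $h_m$; those involve $M_{ij}$ as a linear function of $h_m$ with leading coefficient in $[-2A,2A]\setminus\{0\}$. By H\"older's inequality together with the computation \eqref{eq:heulerphi}, and noting that the values of $M_{ij}(h_m)$ for $h_m\in[0,H]$ all lie in an interval of length $O_A(H)$, the inner sum over $h_m$ is bounded by $O_{k,A}(H)$ uniformly in $h_1,\ldots,h_{m-1}$. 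Iterating the argument over $h_{m-1},\ldots,h_1$ then produces the desired bound $\ll_{k,A}H^m$.

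The main technical obstacle, I expect, lies in this last step: after fixing $h_1,\ldots,h_{m-1}$, the constant term of $M_{ij}$ viewed as a linear function of $h_m$ can be as large as $O_A(H)$, rather than bounded by an absolute constant as in the hypothesis of Lemma~\ref{lemma_eulerphi}. However, inspection of that lemma's proof shows that its implicit constant depends only on the length of the range of the linear form (which in our application remains $O_A(H)$), so the bound goes through with constant $O_{k,A}(1)$ per coordinate sum.
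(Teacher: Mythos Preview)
Your proposal is correct and follows the same route as the paper: pull the supremum outside by counting the $O_{k,A}(1)$ tuples, apply Lemma~\ref{Singularbound}, then control the resulting product of $|M_{ij}|/\varphi(|M_{ij}|)$ factors via Lemma~\ref{lemma_eulerphi}. Your iterative peeling of coordinates (and the remark about the constant term of $M_{ij}$ being of size $O_A(H)$) is in fact slightly more careful than the paper's single-step application of Lemma~\ref{lemma_eulerphi} with $C=mA$, which glosses over precisely the point you raise in your final paragraph.
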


\begin{proof} Denote by $\Sigma$ the sum of interest on left hand side. Since there are $O_{k,A}(1)$ many choices of affine-linear forms $L_1,\ldots,L_k$ with $A$-bounded coefficients, it suffices to take the supremum outside the sum in $\Sigma$. Then for each $L_1,\ldots,L_k$, by Lemma~\ref{Singularbound} we bound $\Sigma$ as
\begin{align*}
\Sigma \ &\ll_k \ \sum_{\mathbf{h}\in [0,H]^m}\prod_{1\le j<j'\le k}\left(\frac{|L_j(\mathbf{h})-L_{j'}(\mathbf{h}) |}{\varphi(|L_j(\mathbf{h})-L_{j'}(\mathbf{h})|)}\right)^k\\
&= \sum_{\mathbf{h}\in [0,H]^{m-1}}\sum_{h\le H}\prod_{1\le j<j'\le k}\left(\frac{|L_{j,j',\mathbf{h}}(h)|}{\varphi(|L_{j,j',\mathbf{h}}(h))|}\right)^k \ \ll \ \sum_{\mathbf{h}\in [0,H]^{m-1}}H \ll H^m,
\end{align*}
since to each $\mathbf{h}\in [0,H]^{m-1}$, we can apply Lemma~\ref{lemma_eulerphi} with $r=\binom{k}{2}$, $C = mA$, and linear functions $L_{j,j',\mathbf{h}}(h) := L_j(\mathbf{h},h)-L_{j'}(\mathbf{h},h)$.
\end{proof}

Using Corollary~\ref{cor:singular}, we can prove the following bound for high moments of short exponential sums associated with the correlations of the von Mangoldt function. This estimate will be needed in Section~\ref{sec:main}.

\begin{proposition}\label{lem:Lambda6moment}
Let $\ell\geq 1$, $k\geq 2$ be fixed, and let $a_1,\ldots,a_{\ell}$ be distinct fixed integers. For $X\geq H\geq 2$ and $|g(n)|\leq 1$, we have
\begin{align*}
M_{2k}:= &\int_{0}^X \int_{0}^{1}\bigg|\sum_{x\leq n\leq x+H} g(n) e(\alpha n)\prod_{j=1}^{\ell}\Lambda(n+a_j)\bigg|^{2k}\dd\alpha \dd x \\
& \ll \ X(H^{2k-1}+H^{k}(\log X)^{k\ell}+H(\log X)^{(2k-1)\ell}).
\end{align*}
In particular if $H\geq (\log X)^{\ell k/(k-1)}$ then
\begin{align}
M_{2k} \; \ll \; XH^{2k-1}.
\end{align}
\end{proposition}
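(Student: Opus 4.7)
The plan is to estimate $M_{2k}$ by (i) diagonalising the $2k$-th moment via orthogonality in $\alpha$, (ii) bounding correlations of $\Lambda$ at distinct shifts via Selberg's sieve, and (iii) averaging the resulting singular series using Corollary~\ref{cor:singular}.

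First, I would expand $|S|^{2k} = S^k \overline{S}^k$ and use $\int_0^1 e(\alpha t)\,\dd\alpha = \mathbf{1}_{t=0}$ to pick out tuples $(n_1,\ldots,n_{2k})$ with $n_1+\cdots+n_k = n_{k+1}+\cdots+n_{2k}$. Fubini gives an $x$-measure of at most $H$ per diagonal tuple (and zero unless $\max_i n_i - \min_i n_i \le H$). Using $|g|\le 1$ and substituting $n = n_1$, $h_i = n_i - n_1$ (with $h_1 = 0$), this reduces to
\[
M_{2k} \ \ll \ H \sum_{\mathbf{h}} \sum_{n\ll X} \prod_{i=1}^{2k}\prod_{j=1}^{\ell} \Lambda(n+h_i+a_j),
\]
where $\mathbf{h} = (h_2,\ldots,h_{2k-1}) \in [-H,H]^{2k-2}$ and $h_{2k}$ is fixed by the diagonal relation. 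For fixed $\mathbf{h}$, let $\mathcal{C}(\mathbf{h})$ denote the set of distinct values among $\{h_i+a_j\}_{i,j}$ and set $m(\mathbf{h}) := |\mathcal{C}(\mathbf{h})|$. Bounding repeated shifts pointwise via $\Lambda^e \ll (\log X)^{e-1}\Lambda$ and applying Selberg's sieve (Lemma~\ref{Selbtuple}) to the prime $m$-tuple indexed by $\mathcal{C}(\mathbf{h})$ (absorbing prime-power contributions as lower order) yields
\[
\sum_n \prod_{i,j} \Lambda(n+h_i+a_j) \ \ll \ (\log X)^{2k\ell - m(\mathbf{h})}\,\mathfrak{S}(\mathcal{C}(\mathbf{h}))\,X,
\]
so $M_{2k} \ll HX \sum_{\mathbf{h}} (\log X)^{2k\ell - m(\mathbf{h})}\mathfrak{S}(\mathcal{C}(\mathbf{h}))$.

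To average over $\mathbf{h}$, I stratify by the equality partition $\pi$ of $\{1,\ldots,2k\}$ defined by $h_i = h_{i'}$, and further by the incidental ``Type-B'' coincidences $h_i + a_j = h_{i'} + a_{j'}$ with $h_i \ne h_{i'}$. On the main stratum of a partition $\pi$ with $k' = k'(\pi)$ classes and no incidental coincidences one has $m(\mathbf{h}) = k'\ell$, while incidental coincidences live on further subvarieties of strictly lower dimension. The $\pi$-stratum has affine dimension $d(\pi)$ equal to $k'-1$ if $\pi$ is \emph{balanced} (every class has equal intersection with $\{1,\ldots,k\}$ and $\{k+1,\ldots,2k\}$) and $k'-2$ otherwise, since the diagonal constraint $\sum_{i\le k} h_i = \sum_{i>k} h_i$ is trivially satisfied only in the balanced case. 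Parametrising the stratum by its free variables, each shift $c \in \mathcal{C}(\mathbf{h})$ becomes an affine-linear form with $O_{k,\ell}(1)$-bounded integer coefficients, so Corollary~\ref{cor:singular} gives $\sum_{\mathbf{h}\in\pi\text{-stratum}}\mathfrak{S}(\mathcal{C}(\mathbf{h})) \ll H^{d(\pi)}$, and summing over the finitely many partitions,
\[
M_{2k} \ \ll \ X \sum_\pi H^{d(\pi)+1}(\log X)^{(2k-k'(\pi))\ell}.
\]

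The quantity $H^{d(\pi)+1}(\log X)^{(2k-k'(\pi))\ell}$ is monotone in $k'$ within each of the balanced and unbalanced families; since balanced partitions must have $k' \le k$ (each balanced class has size $\ge 2$), its maximum is attained at three extremal configurations: (a) all singletons ($k'=2k$, unbalanced, $d=2k-2$), giving $H^{2k-1}$; (b) the canonical pairing $\{\{i,k+i\}:1\le i\le k\}$ ($k'=k$, balanced, $d=k-1$), giving $H^k(\log X)^{k\ell}$; and (c) the trivial partition ($k'=1$, balanced, $d=0$), giving $H(\log X)^{(2k-1)\ell}$. A parallel check shows that the Type-B substrata lose a factor $((\log X)/H)^r$ per additional coincidence relative to $r=0$, and are in turn dominated by extreme (c) (the worst case being $\ell=1$, where all Type-B strata share the same exponent $(2k-1)\ell$). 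The ``in particular'' clause then follows since $H^k(\log X)^{k\ell} \le H^{2k-1}$ whenever $H \ge (\log X)^{k\ell/(k-1)}$, with the third term being even more easily dominated. The main obstacle is carrying out this combinatorial case analysis cleanly, in particular bookkeeping the balanced/unbalanced dimension dichotomy in concert with the Type-B substrata, and verifying that all intermediate partitions are absorbed by the three extremes.
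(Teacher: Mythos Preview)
Your proposal is correct and follows the same three-step strategy as the paper (orthogonality expansion, Selberg's sieve via Lemma~\ref{Selbtuple}, then Corollary~\ref{cor:singular} to average the singular series), differing only in how the combinatorics is organized. Where you stratify first by the equality partition $\pi$ on the $h_i$ and then handle ``Type-B'' coincidences $h_i+a_j=h_{i'}+a_{j'}$ separately, the paper instead stratifies directly by $m=2k\ell-|\mathcal A+\mathcal H|$ and bounds the dimension via the set $\mathcal I=\{i:\exists\,i'>i,\ |h_i-h_{i'}|\le \max_j|a_j|\}$, proving the clean inequality $|\mathcal I|\ge\lceil m/\ell\rceil$. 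This treats your Type-A and Type-B coincidences uniformly (both force $i\in\mathcal I$), so no separate ``parallel check'' for the Type-B substrata is needed; moreover, your balanced/unbalanced dichotomy is exactly the paper's observation that the diagonal constraint is independent of the $|\mathcal I|$ close-pair constraints once $|\mathcal I|\le k-1$ (equivalently, once there are more than $k$ clusters of near-equal $h_i$, since more than $k$ nonempty balanced classes would require more than $2k$ indices). The paper's packaging thus sidesteps precisely the bookkeeping you flag as the main obstacle, but the two arguments are equivalent in substance.
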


\begin{remark}
The bound above for $M_{2k}$ is optimal up to a constant factor, assuming the Hardy--Littlewood prime tuples conjecture. Indeed, one can show that $M_{2k}\gg X H^{2k-1}$ in the case $g(n)\equiv 1$, assuming the Hardy--Littlewood conjecture (this is done by considering the contribution to \eqref{eq4.5} below coming from those $(n_1,\ldots, n_{2k})$ with $H>|n_i-n_j|>\max_{i}|a_i|$ for all $i\neq j$). Similarly, one can show $M_{2k}\gg XH^{k}(\log X)^{k\ell}$ (by considering the contribution to \eqref{eq4.5} below coming from those $(n_1,\ldots, n_{2k})$ with $n_{k+i}=n_i$ for all $i\le k$ and $H>|n_i-n_j|>\max_{i}|a_i|$). Lastly, one can show $M_{2k}\gg XH(\log X)^{(2k-1)\ell}$ (by considering the contribution to \eqref{eq4.5} below coming from those $(n_1,\ldots, n_{2k})$ with $n_1=\cdots=n_{2k}$).
\end{remark}

\begin{proof} Expanding out the definition of $M_{2k}$ using orthogonality, we obtain
\begin{align}\label{eq4.5}
M_{2k} &= \int_{0}^X\sum_{\substack{x\leq n_1,\ldots, n_{2k}\leq x+H\\n_1+\cdots +n_k=n_{k+1}+\cdots +n_{2k}}}g(n_1)\cdots g(n_k)\overline{g(n_{k+1})\cdots g(n_{2k})}\prod_{\substack{j\le \ell\\i\le 2k}} \Lambda(n_i+a_j) \dd x,
\end{align}
and so $|g(n)|\le 1$ implies
\begin{align}\label{eq5}
M_{2k} &\le \sum_{\substack{n_1,\ldots, n_{2k}\leq X+H\\n_1+\cdots +n_k=n_{k+1}+\cdots +n_{2k}}}\prod_{\substack{j\le \ell\\i\le 2k}} \Lambda(n_i+a_j)\cdot \int_{0}^X \1_{x\leq n_1\leq x+H}\cdots \1_{x\leq n_{2k}\leq x+H}\dd x.
\end{align}
For each $n_1,\ldots, n_{2k}\leq X+H$ denote $M=\max_{1\le i,j\le 2k} |n_i-n_j|$, so that
\begin{align*}
\int_{0}^X \1_{x\leq n_1\leq x+H}\cdots \1_{x\leq n_{2k}\leq x+H}\dd{x}&=(H-M)\1_{0\leq M\leq H} \ \leq \ H \1_{0\leq M< H},
\end{align*}
and if we let $n=n_{2k}$, $h_i = n_i-n$ (so $h_{2k}=0$), then \eqref{eq5} becomes
\begin{align*}
M_{2k} &\le \ H\sum_{\substack{n_1,\ldots, n_{2k}\leq X+H\\n_1+\cdots +n_k=n_{k+1}+\cdots +n_{2k}\\\max_{i,j}|n_i-n_j|< H}}\prod_{\substack{j\le \ell\\i\le 2k}} \Lambda(n_i+a_j)\\
&\leq \ H\sum_{\substack{-H\leq h_1,\ldots, h_{2k-1}\leq H\\h_1+\cdots+h_k=h_{k+1}+\cdots +h_{2k-1}}}\sum_{n\leq X+H}\prod_{\substack{1\le j\le \ell\\0\le i\le 2k-1}} \Lambda(n+h_i+a_j).
\end{align*}
Write $\mathcal{A}=\{a_1,\ldots, a_{\ell}\}$ and $\mathcal H=\{0,h_1,\ldots, h_{2k-1}\}$  (note $h_{2k-1}=h_1+\cdots+h_{k-1}-h_{k}-\cdots -h_{2k-2}$). Let $\mathcal{A}+\mathcal{H}$ denote the sumset, and observe that $|\mathcal{A}+\mathcal{H}|\in [\ell,2k\ell]$. In what follows, for convenience of notation we denote $h_0=0$. 

Now we partition the tuples $\mathcal H$ according to $|\mathcal{A}+\mathcal{H}|$, namely, for $m\in[0,(2k-1)\ell]$ denote
\begin{align*}
{\rm H}_m = \big\{(h_1,\ldots,h_{2k-2}) \; : \; |h_i|\le H, \ |\mathcal{A}+\mathcal{H}|=2k\ell-m \big\},
\end{align*}
so that by Lemma~\ref{Selbtuple},
\begin{align}\label{m6}
M_{2k} &\ll XH\sum_{m=0}^{(2k-1)\ell}(\log X)^{m}\sum_{(h_1,\ldots,h_{2k-2})\in {\rm H}_m}\,\mathfrak{S}\big(\mathcal{A}+\mathcal{H}\big). 
\end{align}
Let $A=\max_{a\in \mathcal A}|a|$. We claim that
\begin{align*}
\big|\mathcal I\big| \ \geq \ \lceil m/\ell\rceil,\qquad \text{where}\quad \mathcal I :=\big\{0\leq i\leq 2k-1:\,\, \exists i'>i:\,\, h_i-h_{i'}\in [-A,A]\big\}.
\end{align*}
To show this, first note that all the sums $a_j+h_i$, $j\leq \ell$, $i\not \in \mathcal{I}$ are distinct, since if there was a coincidence $a+h_i=a'+h_{i'}$ for some $a,a'\in \mathcal A$, $i'>i$, then $h_i-h_{i'}=a'-a\in [-A,A]$ and so $i\in \mathcal{I}$. Thus we must have
\begin{align*}
2k\ell-m=|\mathcal{A}+\mathcal{H}|\geq (2k-|\mathcal{I}|)\ell,
\end{align*}
so $|\mathcal{I}|\geq m/\ell$, as claimed. Now, for $I:=|\mathcal{I}|$, we have a system of $I+1$ linear inequalities constraining the vector $(h_1,\ldots, h_{2k-1})\in [-H,H]^{2k-1}$:
\begin{align*}
\begin{cases}
h_1+\cdots+h_{k-1}-h_k-\cdots-h_{2k-2}-h_{2k-1}=0\\
|h_i-h_{\sigma(i)}|\leq A,\quad i\in \mathcal{I},
\end{cases}
\end{align*}
where $h_0=0$, and $\sigma(i)\in [i+1,2k-1]$ are some integers.

Let $I'=I+1$ if $I\leq k-1$, and let $I'=I$ if $I\geq k$. Then by basic linear algebra the set of $I+1$ linear forms above contains a subset of $I'$ linearly independent forms, so there exists a vector ${\bf u}=(u_1,\ldots, u_{2k-1-I'})\in [-H,H]^{2k-1-I'}$ (depending on $h_i$) and linear forms $L_i:\mathbb{Z}^{2k-2-I'}\to \mathbb{Z}$ with bounded coefficients (and with $L_{0}\equiv 0$) such that denoting $L_{i,j}({\bf u}) = a_j + L_i({\bf u})$ we have
\begin{align}
\mathcal{H}&=\{L_0({\bf u})+O(1),\ldots, L_{2k-1}({\bf u})+O(1)\} \nonumber\\
\mathcal{A}+\mathcal{H}&= \{L_{i,j}({\bf u})+O(1): 0\leq i\le 2k-1, j\le \ell\}.
\end{align}
Thus by Corollary~\ref{cor:singular} and the fact that $I'\geq \lceil m/\ell\rceil+1_{m\leq (k-1)\ell}$, for each $m$ we have
\begin{align}\label{eq:AplusHtoLij}\begin{split}
\sum_{(h_1,\ldots,h_{2k-2})\in {\rm H}_m}\,\mathfrak{S}\big(\mathcal{A}+\mathcal{H}\big) &\le \sum_{{\bf u}\in[-H,H]^{2k-2-\lceil m/\ell\rceil+\1_{m> (k-1)\ell}}}\mathfrak{S}\big(\big(L_{i,j}({\bf u})+r\big)_{0\leq i\leq 2k-1, j\leq \ell,r=O(1)}\big)\\
&\ll H^{2k-2-\lceil m/\ell\rceil+\1_{m> (k-1)\ell}}.
\end{split}
\end{align}
Hence the bound in \eqref{m6} becomes
\begin{align*}
M_{2k} &\ll XH\sum_{m=0}^{(2k-1)\ell}(\log X)^mH^{2k-2-\lceil m/\ell\rceil+\1_{m> (k-1)\ell}}\\
&\ll X(H^{2k-1}+H^{k}(\log X)^{k\ell}+H(\log X)^{(2k-1)\ell}).
\end{align*}
Finally, the assumption $H \ge (\log X)^{\ell k/(k-1)}$ implies $M_{2k} \ll XH^{2k-1}$ as claimed.
\end{proof}

\subsection{Correlations of primes and integers having typical factorization}

In this subsection, we will prove  Lemma~\ref{le:nairtenenbaum}, which upper bounds the correlations of the von Mangoldt function and the indicator of numbers having prime factors in prescribed intervals. 

We first consider a class of multiplicative functions with `moderate growth'.

\begin{definition}\label{def:multclass}
Denote the set $\mathcal M$ of multiplicative $f:\N\to\R_{\ge0}$ with
\begin{itemize}
\item[(i)] $f(p^\nu) \le 2^\nu$ for all primes $p$, $\nu\ge1$.
\item[(ii)] for all $\eps>0$ there exists $B=B(\eps)$ such that $f(n) \le Bn^{\eps}$ for all $n\ge1$.
\end{itemize}
\end{definition}

Now we give a special case of a general bound of Henriot~\cite{henriot} for the class $\mathcal M$. Henriot's result refines earlier work of Nair--Tenenbaum~\cite{NairTen}, and importantly, it is uniform in the discriminant.

\begin{lemma}\label{lem:Henriot}
Given $k\ge1$ and a tuple $\mathcal H=\{h_1,\ldots,h_k\}\subset [1,X]$, denote $\nu_p(\mathcal{H}) = |\{h\,(\textnormal{mod }p) : h\in\mathcal H\}|$. Then for any multiplicative functions $f_j\in \mathcal M$,
\begin{align}\label{eq:Henriot}
\sum_{\sqrt{X} \le n\le X}\prod_{j=1}^k f_j(n+h_j) \ \ll_{k} \Delta_D\,X\prod_{p\le \sqrt{X}}\Big(1-\frac{\nu_p(\mathcal{H})}{p}\Big)\prod_{j=1}^k\sum_{n\le \sqrt{X}}\frac{f_j(n)}{n}
\end{align}
where $D=D(\mathcal H)=\prod_{i<j}(h_j-h_i)^2$, and
\begin{align*}
\Delta_D = \prod_{p\mid D}\bigg(1\ + \sum_{\substack{0\leq \nu_1,\ldots, \nu_k\leq 1\\(\nu_1,\ldots, \nu_k)\neq (0,\ldots, 0)}} \frac{|\{n\, (\textnormal{mod }p^{2})\,:\, p^{\nu_j}\mid\mid  n+h_j\, \forall j \}|}{p^2}\prod_{j=1}^k f_j(p)\bigg)
\end{align*}
In particular, if $|f(p)|\leq 1$ for all $p$, we have $\Delta_D \le \prod_{p\mid D}(1+ 2^k/p)$.
\end{lemma}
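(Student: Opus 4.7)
The proposed strategy is to obtain the first bound (\ref{eq:Henriot}) by direct appeal to Henriot's uniform Nair--Tenenbaum inequality from \cite{henriot}, then to deduce the specialized bound on $\Delta_D$ by a short local computation. For the first step, one applies Henriot's result to the $k$-variable multiplicative function $F(x_1,\ldots,x_k)=\prod_{j=1}^{k}f_j(x_j)$ evaluated along the linear forms $L_j(n)=n+h_j$. The two conditions in Definition~\ref{def:multclass} (namely $f_j(p^\nu)\le 2^\nu$ and $f_j(n)\ll_\varepsilon n^\varepsilon$ for every $\varepsilon>0$) are precisely the growth hypotheses that Henriot's theorem requires. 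The density factor $\prod_{p\le\sqrt X}(1-\nu_p(\mathcal{H})/p)$ is then the natural sieve density since $p\mid\prod_j(n+h_j)$ exactly when $n\equiv -h_j\pmod p$ for some $j$; the mean-value factor $\prod_j\sum_{n\le\sqrt X}f_j(n)/n$ is exactly Henriot's ``local average'' output; and the discriminant-type correction $\Delta_D$ supported on $p\mid D$ records the exceptional primes at which the forms $L_j$ degenerate.

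For the inequality $\Delta_D\le\prod_{p\mid D}(1+2^k/p)$ under the assumption $|f_j(p)|\le 1$, I would proceed term by term in the inner sum defining $\Delta_D$. Fix a prime $p\mid D$ and a nonzero tuple $(\nu_1,\ldots,\nu_k)\in\{0,1\}^k$, and let $S=\{j:\nu_j=1\}$. The constraints $p^{\nu_j}\Vert n+h_j$ for $j\in S$ pin $n\pmod p$ to a single residue class (consistent only if all $-h_j$ with $j\in S$ agree mod $p$), and then excluding $n\equiv -h_j\pmod{p^2}$ for each $j\in S$ removes at most $|S|$ residues mod $p^2$. Hence the cardinality
\[
\big|\{n\bmod p^2:\; p^{\nu_j}\Vert n+h_j\;\forall j\}\big|
\]
is at most $p$. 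Using $|f_j(p)|\le 1$, each nonzero tuple thus contributes at most $p/p^2=1/p$ to the inner sum, and summing over the $2^k-1$ nonzero tuples yields $\Delta_D\le\prod_{p\mid D}(1+(2^k-1)/p)\le\prod_{p\mid D}(1+2^k/p)$.

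The only real obstacle is bookkeeping: one must match the normalization of Henriot's theorem to the statement here, in particular verifying that the correction factor in \cite{henriot} coincides (up to constants depending only on $k$, absorbed into $\ll_k$) with the $\Delta_D$ defined above, and that the truncation to $\sqrt X\le n\le X$ is consistent with Henriot's hypotheses on the support of the sum. Once the translation of notation is in place, no further analytic input is needed beyond the elementary counting argument above.
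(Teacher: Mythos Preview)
Your proposal is correct and follows essentially the same route as the paper: the paper simply quotes Henriot's Theorem~3 with the specializations $x=\sqrt{X}$, $y=X$, $\delta=1/(2k)$, linear polynomials $Q_j(n)=n+h_j$, and $F(n_1,\ldots,n_k)=\prod_j f_j(n_j)$, noting that the discriminant is $D$ and $\|Q\|\ll X^k$. Your additional local computation for the bound $\Delta_D\le\prod_{p\mid D}(1+2^k/p)$ is a welcome elaboration of something the paper states without proof, and your counting argument (each nonzero tuple contributes at most $p/p^2=1/p$) is correct.
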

\begin{proof}
This is~\cite[Theorem 3]{henriot} in the special case of $x=\sqrt{X}$, $y=X$, $\delta = 1/(2k)$, with linear polynomials $Q_j(n)=n+h_j$, $Q(n) = \prod_{j=1}^k Q_j(n)$, and the multiplicative function $F(n_1,\ldots,n_k)=\prod_{j=1}^k f_j(n_j)$. Note that the discriminant of the polynomial $Q$ is $\prod_{i<j}(h_i-h_j)^2=D$ and the  sum of coefficients is $\|Q\| \ll \prod_{j=1}^k h_j \ll X^{k}$.
\end{proof}
We remark that the bound \eqref{eq:Henriot} is of the correct order of magnitude when the functions $f_j$ are not too small, e.g. $f_j(n) \ge \eta^{\Omega(n)}$ for some $\eta>0$; see~\cite[Theorem 6]{henriot}.

As mentioned, we will need in Section~\ref{sec:main} a simple upper bound for the correlations of the primes and integers with prescribed factorization patterns.

\begin{lemma}\label{le:nairtenenbaum} Let $\ell\geq 1$ be fixed and let $a_1,\ldots, a_{\ell}$ be fixed and distinct. Let $X\geq 2$ and $1\leq h\leq X$ with $h\neq a_j$ for all $1\leq j\leq \ell$. Let $\mathcal{I}\subset [1,X]$, and let $\mathcal{N}$ be the set of positive integers having no prime factors from $\mathcal{I}$. Then we have
\begin{align}\label{eq:lambdabound}
\sum_{n\leq X}\1_{\mathcal{N}}(n+h)\Lambda(n+a_1)\cdots \Lambda(n+a_{\ell})\ll X\prod_{j=1}^{\ell}\left(\frac{|h-a_j|}{\varphi(|h-a_j|)} \right)^{2^{\ell+1}}\prod_{p\in \mathcal{I}}\left(1-\frac{1}{p}\right).    
\end{align}
\end{lemma}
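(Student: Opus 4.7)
My plan is to first reduce the $\Lambda$-weights to prime indicators, then apply Henriot's multiplicative inequality (Lemma~\ref{lem:Henriot}) to the resulting sum. First, the contribution to the sum from $n$ where some $n+a_j$ is a proper prime power is at most $O_\ell(\sqrt X(\log X)^\ell)$, which is negligible against the $X$ in the claimed bound. Thus we may bound $\Lambda(n+a_j)\le(\log X)\mathbf{1}_{n+a_j\in\mathbb P}$. Choosing a threshold $z=X^{c}$ for a small $c=c(\ell)>0$, I use the Selberg-type majorant $\mathbf{1}_{n+a_j\in\mathbb P,\,n+a_j>z}\le\mathbf{1}_{(n+a_j,P(z))=1}$, where $P(z)=\prod_{p\le z}p$; the boundary contribution from $n\le z$ is $O_\ell(z(\log X)^\ell)$, also negligible. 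This reduces the task to bounding
\[
(\log X)^\ell\sum_{n\le X}\mathbf{1}_{(n+h,P_{\mathcal I})=1}\prod_{j=1}^\ell\mathbf{1}_{(n+a_j,P(z))=1}.
\]

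I then apply Lemma~\ref{lem:Henriot} with $k=\ell+1$, tuple $\mathcal H=\{h,a_1,\ldots,a_\ell\}$, $f_0(m)=\mathbf{1}_{(m,P_{\mathcal I})=1}$, and $f_j(m)=\mathbf{1}_{(m,P(z))=1}$ for $1\le j\le\ell$. The resulting bound decomposes into several factors, which I estimate separately. By Mertens, $S_j:=\sum_{n\le\sqrt X}f_j(n)/n\ll(\log X)/\log z$ for each $j\ge1$, and $S_0\ll(\log X)\prod_{p\in\mathcal I,\,p\le\sqrt X}(1-1/p)$. Separating the primes dividing $D=\prod_{i<j}(h_i-h_j)^2$ in the Euler product gives $\prod_{p\le\sqrt X}(1-\nu_p(\mathcal H)/p)\ll(\log X)^{-(\ell+1)}(D/\varphi(D))^{O_\ell(1)}$. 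The discriminant factor satisfies $\Delta_D\le\prod_{p|D}(1+2^{\ell+1}/p)\le(D/\varphi(D))^{2^{\ell+1}}$. Since the differences $|a_i-a_j|$ are $O_\ell(1)$, we have $D/\varphi(D)\ll\prod_{j=1}^\ell|h-a_j|/\varphi(|h-a_j|)$. Multiplying by the reserved $(\log X)^\ell$ from Step 1 and taking $\log z\asymp\log X$ cancels the logarithmic factors, and collecting the remaining powers of $D/\varphi(D)$ into the single exponent $2^{\ell+1}$ (using that $D/\varphi(D)\ge 1$, so a larger exponent only weakens the bound) yields the claimed inequality.

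One technicality is that $S_0$ only sees the primes of $\mathcal I$ up to $\sqrt X$; to recover the factor $\prod_{p\in\mathcal I,\,p>\sqrt X}(1-1/p)$, I perform a short Bonferroni inclusion-exclusion on the large primes in $\mathcal I\cap(\sqrt X,X]$, using that each such prime $p$ eliminates one residue class modulo $p$ and that these constraints are essentially independent of the other sieve conditions. The main obstacle is the careful bookkeeping of the numerous multiplicative factors produced by Henriot's bound, ensuring that together they yield a clean product $\prod_{j=1}^\ell(|h-a_j|/\varphi(|h-a_j|))^{2^{\ell+1}}$ with no residual logarithmic loss and with the full factor $\prod_{p\in\mathcal I}(1-1/p)$ in place; the bounds $(1-\nu_p(\mathcal H)/p)\asymp(1-1/p)^{\ell+1}$ for $p\nmid D$ and $(1+2^{\ell+1}/p)\le(p/(p-1))^{2^{\ell+1}}$ are the crucial tools in this bookkeeping.
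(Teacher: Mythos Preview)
Your approach is essentially the paper's: reduce the $\Lambda$-weights to rough-number indicators, apply Henriot's bound (Lemma~\ref{lem:Henriot}) with the tuple $\{h,a_1,\ldots,a_\ell\}$ and the multiplicative functions $\1_{\mathcal N}$ and $\1_{(\cdot,P(z))=1}$, then evaluate the resulting Euler products via Mertens. The paper takes $z=(2X)^{1/2}$ rather than $z=X^{c}$, but this is immaterial.

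Two small points. First, your sentence about ``collecting the remaining powers of $D/\varphi(D)$ into the single exponent $2^{\ell+1}$'' has the logic reversed: you produce a total factor of order $(D/\varphi(D))^{2^{\ell+1}+O_\ell(1)}$ (from $\Delta_D$ together with the correction $\prod_{p\mid D}(1-\nu_p/p)/(1-(\ell+1)/p)$), and since $D/\varphi(D)\ge 1$ you cannot drop down to the smaller exponent $2^{\ell+1}$. So strictly speaking you only obtain the lemma with some exponent $C_\ell>2^{\ell+1}$. (The paper's proof has the same slippage: it replaces $\prod_p(1-\nu_p/p)$ by $\prod_p(1-(\ell+1)/p)$, but since $\nu_p\le \ell+1$ this is a lower bound, not an upper bound, and the missing factor is again $(D/\varphi(D))^{O_\ell(1)}$.) This does not affect any application, since only some fixed power is ever needed. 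Second, the Bonferroni step for primes in $\mathcal I\cap(\sqrt X,X]$ is unnecessary: by Mertens $\prod_{\sqrt X<p\le X}(1-1/p)\asymp 1$, hence $\prod_{p\in\mathcal I,\,p\le \sqrt X}(1-1/p)\asymp \prod_{p\in\mathcal I}(1-1/p)$ and the full product comes for free.
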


\begin{proof}
Let $P(z)=\prod_{p< z} p$. We may assume that the summation in \eqref{eq:lambdabound} runs over $(2X)^{1/2}\leq n\leq X$, since the contribution of $n<(2X)^{1/2}$ is negligible. Therefore, the proof of \eqref{eq:lambdabound} has been reduced to showing
\begin{align}\label{eq:lambdabound2}
\sum_{n\leq X}\1_{\mathcal{N}}(n+h)\prod_{j=1}^{\ell}\1_{(n+a_j,P((2X)^{1/2}))=1}\ll X(\log X)^{-\ell}\prod_{j=1}^{\ell}\left(\frac{|h-a_j|}{\varphi(|h-a_j|)} \right)^{2^{\ell+1}}\prod_{p\in \mathcal{I}}\left(1-\frac{1}{p}\right).    
\end{align}
Note that all the indicator functions above are multiplicative. Denote $D=\prod_{1\le i<j\le \ell}(a_i-a_j)\prod_{j\leq \ell}(h-a_j)$. Now by Lemma~\ref{lem:Henriot}, we have
\begin{align}
\sum_{n\leq X}\1_{\mathcal{N}}(n+h)\prod_{j=1}^{\ell} & \1_{(n+a_j,P((2X)^{1/2}))=1} \nonumber\\
&\ll \ \Delta_D X\prod_{p\leq X}\left(1-\frac{\ell+1}{p}\right) \sum_{n_0,n_1,\ldots, n_{\ell}\leq X}\frac{\1_{\mathcal{N}}(n_0)\prod_{j=1}^{\ell}\1_{(n_j,P((2X)^{1/2}))=1}}{n_0n_1\cdots n_{\ell}},   
\end{align}
where $\Delta_D$ is crudely bounded as
\begin{align*}
\Delta_D\le \prod_{p\mid D}\left(1+\frac{2^{\ell+1}}{p}\right)\ll \prod_{j=1}^{\ell}\left(\frac{|h-a_j|}{\varphi(|h-a_j|)} \right)^{2^{\ell+1}}.   
\end{align*}
Now \eqref{eq:lambdabound2} follows by noting that by Euler products and Mertens's theorem 
\begin{align*}
 \sum_{n\leq X}\frac{\1_{\mathcal{N}}(n)}{n}\leq \prod_{\substack{p\leq X\\p\not \in \mathcal{I}}}\left(1+\frac{1}{p}+\frac{1}{p^2}+\cdots\right)&= \prod_{p\leq X}\left(1+\frac{1}{p}+\frac{1}{p^2}+\cdots\right)\prod_{p\in \mathcal{I}}\left(1-\frac{1}{p}\right)\\
 &\ll (\log X)^{-\ell}
 \prod_{p\leq X}\left(1-\frac{\ell+1}{p}\right)^{-1}\prod_{p\in \mathcal{I}}\left(1-\frac{1}{p}\right),  
\end{align*}
and also by Mertens's theorem
\begin{equation*}
\sum_{n\leq X}\frac{\1_{(n,P((2X)^{1/2}))=1}}{n}=1+\sum_{(2X)^{1/2}\leq p\leq X} \frac{1}{p}\asymp 1.
\qedhere
\end{equation*}
\end{proof}

\section{A Fourier-analytic argument}\label{sec:fourier}

Our main theorems will be a consequence of the propositions in the previous section and Proposition~\ref{thm:fourier} below on correlations on average under suitable hypotheses. We begin by formulating the necessary hypotheses. 

In the rest of the paper, 
given a function $f:\mathbb{N}\to \mathbb{C}$, let $E_f$ be a convenient upper bound for the average of $f$, so that
\begin{align*}
\frac{1}{X}\sum_{n\leq X}|f(n)|  \le E_f(X).
\end{align*}
For example, we may simply take $E_f=O(1)$ if $f$ is bounded, or if $f=\Lambda$. 

\begin{definition}
Let $X,H,C,p>1$ and $\eta>0$, and let $f:\mathbb{N}\to \mathbb{C}$.  We say that $f$ satisfies hypothesis $\mathsf{H}_1(X,H,C,p)$ if
    \begin{align*}
    \int_{0}^{X}\int_{0}^{1}\bigg|\sum_{x\leq n\leq x+2H}f(n)e(\alpha n)\bigg|^p\, \dd{\alpha} \,\dd{x}\leq C H^{p-1}XE_f(X)^p.      \end{align*}
    
We say that $f$ satisfies hypothesis $\mathsf{H}_2(X,H,\eta)$ if
    \begin{align*}
     \sup_{\alpha}\int_{0}^{X}\bigg|\sum_{x\leq n\leq x+2H}f(n)e(\alpha n)\bigg|^2\, \dd{x}\leq \eta H^2XE_f(X)^2.  
    \end{align*}
\end{definition}

\begin{proposition} \label{thm:fourier}
Let $X\geq H\geq 2$. Let $C\geq 1$, $p\geq 2$ and $\eta>0$. 
Let $f:\mathbb{N}\to \mathbb{C}$ be any function supported on $[0,X]$ and satisfying $\mathsf{H}_2(X,H,\eta)$, and let $g:\mathbb{N}\to \mathbb{C}$ be any function satisfying $\mathsf{H}_1(X,H,C,p)$. Then we have 
\begin{align*}
\sum_{h\leq H}\bigg|\sum_{n\leq X}f(n+h)g(n)\bigg|\ll (\eta C)^{1/p}HX E_{|f|+|f|^2}(X)E_{g}(X). 
\end{align*}

\end{proposition}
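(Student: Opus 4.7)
The plan is to express the sum $\sum_{h\le H}|T_h|$ (with $T_h:=\sum_{n\le X}f(n+h)g(n)$) via a Fourier identity as a single integral against a trigonometric multiplier, and then to bound that integral by Cauchy--Schwarz and H\"older, invoking $\mathsf{H}_2$ on one factor and $\mathsf{H}_1$ on the other.

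First I would choose unimodular weights $\epsilon_h$ so that $\sum_h|T_h|=\sum_h\epsilon_hT_h$. Combining the indicator identity $\int_{\mathbb R}\1_{x\le m\le x+2H}\1_{x\le n\le x+2H}\,\dd x=(2H-|m-n|)_+$ with Plancherel, one obtains for each $1\le h\le H$
\[
(2H-h)T_h=\int_{\mathbb R}\int_0^1 F_x(\alpha)\overline{\widetilde G_x(\alpha)}\,e(-h\alpha)\,\dd\alpha\,\dd x,
\]
where $F_x(\alpha):=\sum_{x\le m\le x+2H}f(m)e(\alpha m)$ and $\widetilde G_x(\alpha):=\sum_{x\le n\le x+2H}\overline{g(n)\1_{n\le X}}\,e(\alpha n)$. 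Summing against $\epsilon_h/(2H-h)$ then yields
\[
\sum_{h\le H}|T_h|=\int_{\mathbb R}\int_0^1 F_x(\alpha)\overline{\widetilde G_x(\alpha)}K(\alpha)\,\dd\alpha\,\dd x, \qquad K(\alpha):=\sum_{h\le H}\frac{\epsilon_h}{2H-h}e(-h\alpha).
\]
The Fourier coefficients of $K$ have absolute value at most $1/H$, so $\|K\|_\infty\le 1$ and, by Parseval, $\|K\|_{L^2}^2\le 1/H$; interpolation then gives the key estimate $\|K\|_{L^r}\ll H^{-1/r}$ for every $r\in[2,\infty]$.

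I would then restrict the $x$-integration to $[0,X]$ and apply Cauchy--Schwarz in $\alpha$ followed by Cauchy--Schwarz in $x$, producing
\[
\int_0^X\int_0^1 |F_x\widetilde G_x K|\,\dd\alpha\,\dd x\le \Bigl(\int_0^X\|F_x\|_{L^2_\alpha}^2\,\dd x\Bigr)^{1/2}\Bigl(\int_0^X\|\widetilde G_xK\|_{L^2_\alpha}^2\,\dd x\Bigr)^{1/2}.
\]
By Fubini, $\mathsf{H}_2$ bounds the first factor by $\eta^{1/2}HX^{1/2}E_f$. For the second factor, H\"older in $\alpha$ with exponents $p/2,\,p/(p-2)$ gives pointwise in $x$
\[
\int_0^1 |\widetilde G_x|^2|K|^2\,\dd\alpha\le \|\widetilde G_x\|_{L^p_\alpha}^2\cdot \|K\|_{L^{2p/(p-2)}}^2\ll H^{-(p-2)/p}\|\widetilde G_x\|_{L^p_\alpha}^2,
\]
and integrating in $x$, then applying Minkowski's integral inequality (valid since $p/2\ge 1$) followed by H\"older in $x$ with exponents $p/2,\,p/(p-2)$ against $\mathsf{H}_1$, yields $\int_0^X\|\widetilde G_x\|_{L^p_\alpha}^2\,\dd x\le C^{2/p}H^{2(p-1)/p}XE_g^2$. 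The powers of $H$ telescope to $1$, so $\int_0^X\|\widetilde G_xK\|_{L^2_\alpha}^2\,\dd x\ll C^{2/p}HXE_g^2$ and the main integral is $\ll \eta^{1/2}C^{1/p}HXE_fE_g$. Since the statement is trivial when $\eta>1$ (the right-hand side then exceeds the pointwise bound $\ll HXE_{|f|}E_{|g|}$), one may assume $\eta\le 1$, whence $\eta^{1/2}\le \eta^{1/p}$ for $p\ge 2$, giving the claimed bound $\ll (\eta C)^{1/p}HXE_{|f|+|f|^2}E_g$.

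It remains to handle the boundary integral over $x\in[-2H,0]\cup[X,X+2H]$, where $\mathsf{H}_2$ does not directly apply; here I would bound $\|F_x\|_{L^2_\alpha}^2$ by the Parseval identity $\sum_{x\le m\le x+2H}|f(m)|^2$ and absorb the resulting contribution into the main bound via the $E_{|f|^2}$ part of $E_{|f|+|f|^2}$ (using $H\le X$). The main obstacle to watch is the clean matching of $H$-powers in the central estimate: the exponent $(p-2)/p$ lost in bounding $\|K\|_{L^{2p/(p-2)}}$ must be exactly compensated by the exponent $2(p-1)/p$ gained from H\"older in $x$ that converts the $L^p(\dd\alpha\,\dd x)$ hypothesis $\mathsf{H}_1$ into the desired $L^2$-in-$\alpha$, $L^2$-in-$x$ bound; this cancellation (namely $2(p-1)/p-(p-2)/p=1$) is precisely what makes the $L^\infty$-in-$\alpha$ hypothesis $\mathsf{H}_2$ and the $L^p$-in-$\alpha$ hypothesis $\mathsf{H}_1$ synergize to yield the saving $(\eta C)^{1/p}$.
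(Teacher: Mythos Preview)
Your Fourier setup and the kernel bound $\|K\|_{L^r}\ll H^{-1/r}$ are fine, but the main estimate contains an arithmetic error that reflects a structural problem. Multiplying your two factors gives
\[
\eta^{1/2}HX^{1/2}E_f\;\cdot\;C^{1/p}H^{1/2}X^{1/2}E_g \;=\;\eta^{1/2}C^{1/p}H^{3/2}XE_fE_g,
\]
not $\eta^{1/2}C^{1/p}HXE_fE_g$ as you assert. The extra $H^{1/2}$ is not recoverable by the subsequent $\eta^{1/2}\le\eta^{1/p}$ trick, and it ruins the conclusion.

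The loss is caused by applying Cauchy--Schwarz in $\alpha$ \emph{first} with $K$ entirely on the $G$-side. After that step the $F$-side is $\int_0^X\|F_x\|_{L^2_\alpha}^2\,\dd x$, and by Parseval this already equals $\int_0^X\sum_{x\le n\le x+2H}|f(n)|^2\,\dd x\ll HXE_{|f|^2}$; bounding it instead via $\mathsf{H}_2$ by $\eta H^2XE_f^2$ wastes a full factor of $\eta H$, and once $\alpha$ has been integrated out you can no longer exploit the $\sup_\alpha$ in $\mathsf{H}_2$. The paper's proof (and a corrected version of yours) applies Cauchy--Schwarz in $x$ first, keeping the $\alpha$-integral outside, so that $A(\alpha):=\int_0^X|F_x(\alpha)|^2\,\dd x$ remains a function of $\alpha$. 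One then writes $A(\alpha)^{1/2}=A(\alpha)^{1/p}\cdot A(\alpha)^{1/q}$ with $\tfrac1p+\tfrac1q=\tfrac12$, pulls out $\sup_\alpha A(\alpha)^{1/p}\le(\eta H^2XE_f^2)^{1/p}$ via $\mathsf{H}_2$, and handles the remaining $A^{1/q}$ through the $\alpha$-integral by Parseval (together with $\|K\|_{L^2}^2\ll 1/H$ and H\"older with exponents $(2,q,p)$). This splitting is exactly what yields $\eta^{1/p}$ directly while keeping the $H$-exponents balanced to give $HX$; with your kernel $K$ in place of the paper's $C_0$, the same computation goes through verbatim.
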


Note that if $|f(n)|\leq 1$, we can simply take $E_{|f|+|f|^2}(X)=O(E_{|f|}(X))$.

\begin{proof} Let us denote
\begin{align*}
S_{f,g} := \sum_{h\le H}\Big|\sum_{n\le X}f(n+h)g(n)\Big|. 
\end{align*}
Then the task is to show that if $f$ satisfies $\mathsf{H}_2(X,H,\eta)$ and $g$ satisfies $\mathsf{H}_1(X,H,C,p)$, then
\begin{align*}
S_{f,g}\ll \eta^{1/p}C^{1/p}HXE_{|f|+|f|^2}(X)E_g(X).    
\end{align*}

For proving this, we first note that
\begin{align*}
S_{f,g}\leq \frac{1}{H}\sum_{h\le 2H}(2H-h)\Big|\sum_{n\le X}f(n+h)g(n)\Big|.
\end{align*}
We introduce unimodular coefficients $c(h)$ to denote the phase of $\sum_{n\le X}f(n+h)g(n)$, so that
\begin{align*}
|S_{f,g}| & \le \frac{1}{H}\sum_{h\le 2H}(2H-h)c(h)\sum_{n\le X}f(n+h)g(n)\\
& = \frac{1}{H}\sum_{h\le 2H}c(h)\sum_{n\le X+2H} f(n) \sum_{m\le X}g(m)\1_{n=m+h}\cdot\int_0^X \1_{x\le n,m\le x+2H}\dd{x}\\
& = \frac{1}{H}\int_0^X\int_0^1\sum_{h\le 2H}c(h)e(h\alpha) \sum_{x\le n,m\le x+2H}f(n)g(m)e\big((m-n)\alpha\big)\dd{\alpha}\dd{x},
\end{align*}
where we used the orthogonality relation $\1_{n=0}=\int_0^1 e(n\alpha)\dd{\alpha}$. Therefore, we have the upper bound
\begin{align}\label{eq:SfFourier}
|S_{f,g}| \ \leq \frac{1}{H}\left|\int_0^X\int_0^1 C_0(\alpha) F_x(-\alpha) G_x(\alpha)\dd{\alpha}\dd{x}\right|,
\end{align}
where in the triple convolution integral the three exponential sums are
\begin{align*}
C_0(\alpha) &:= \sum_{h\le 2H}c(h)e(h\alpha),\\
F_x(\alpha) &:=\sum_{x\le n\le x+2H}f(n)e(n\alpha),\\
G_x(\alpha) &:=\sum_{x\le m\le x+2H}g(m)e(m\alpha).
\end{align*}
By Fubini's theorem, the triangle inequality and Cauchy--Schwarz, from \eqref{eq:SfFourier} we deduce that
\begin{align*}
|S_{f,g}|\leq \frac{1}{H}\int_{0}^{1}|C_0(\alpha)|\left(\int_{0}^{X}|F_x(-\alpha)|^2\dd{x}\right)^{\frac{1}{2}}  \left(\int_{0}^{X}|G_x(\alpha)|^2\dd{x}\right)^{\frac{1}{2}}  \dd{\alpha}. 
\end{align*}
Let $q$ satisfy $1/p+1/q=1/2$. By taking supremum of the integral with $F_x(-\alpha)$ to the power $1/p$ and applying H\"older's inequality with exponents $(2,q,p)$, we obtain
\begin{align*}
|S_{f,g}|&\leq \frac{1}{H}\sup_{\alpha}\left(\int_{0}^{X}|F_x(-\alpha)|^2\dd{x}\right)^{\frac{1}{p}}   \left(\int_{0}^{1}|C_0(\alpha)|^2\dd{\alpha}\right)^{\frac{1}{2}}\\
& \ \cdot\left(\int_{0}^1\int_{0}^{X}|F_x(-\alpha)|^2\dd{x}\dd{\alpha}\right)^{\frac{1}{q}}\left(\int_{0}^1\left(\int_{0}^{X}|G_x(\alpha)|^2\dd{x}\right)^{\frac{p}{2}}\dd{\alpha}\right)^{\frac{1}{p}}
 =:\frac{1}{H}I_0^{1/p}I_1^{1/2}I_2^{1/q}I_3^{1/p},
\end{align*}
say. Since $f$ satisfies $\mathsf{H}_2(X,H,\eta)$, we have
\begin{align*}
I_0\leq \eta H^2XE_f(X)^2.     
\end{align*}
By  Parseval, we have
\begin{align*}
I_1\leq 2H, \quad I_2\leq 2H\sum_{n\leq X+2H}|f(n)|^2\leq 2HXE_{|f|^2}(X).    
\end{align*}
Lastly, using H\"older's inequality and the fact that $g$ satisfies $\mathsf{H}_1(X,H,C,p)$, we have 
\begin{align*}
I_3\leq X^{p/2-1} \int_{0}^1\int_{0}^{X}|G_x(\alpha)|^p\dd{x}\dd{\alpha}  \leq CH^{p-1}X^{p/2}E_g(X)^p.
\end{align*}
Combining the bounds, we see that
\begin{align*}
|S_{f,g}|&\ll H^{-1}\big(\eta H^2XE_f(X)^2\big)^{1/p}H^{1/2} \big(HXE_{|f|^2}(X)^2\big)^{1/q}   \big(CH^{p-1}X^{p/2}E_g(X)^p\big)^{1/p}\\
&\ll (\eta C)^{1/p}HXE_{|f|+|f|^2}(X)E_g(X), 
\end{align*}
as desired.
\end{proof}

\section{Proofs of the main theorems}\label{sec:main}

We are now ready to prove Theorem~\ref{thm:multsums}. As already discussed in Remark~\ref{rem:quant}, Theorem~\ref{thm_hybrid}(i) concerning the M\"obius case follows immediately as a special case. As we shall see, also Theorem~\ref{thm_hybrid}(ii) follows with essentially the same proof. 

\begin{proof}[Proof of Theorem~\ref{thm:multsums}]
Fix $\eps\in (0,1)$ and $k,\ell\geq 1$, and let $2\leq (\log X)^{\ell+\eps}\leq H\leq \exp((\log X)^{1/1000})$. Let 
\begin{align*}
\delta=10^4\,\frac{\ell}{\eps}\frac{\log \log H}{\log H}.
\end{align*}
Set
\begin{align*}
 f(n)=f_1(n) \1_{\mathcal{S}}(n)\1_{[0,X]}(n),\quad\textnormal{and}\quad g(n)=\prod_{i=2}^kf_i(n+h_i)\prod_{j=1}^{\ell}\Lambda(n+a_j),   
\end{align*}
where $\mathcal{S}$ is the set of positive integers having a prime factor in each of the intervals $[P_i, Q_i]$, with $P_i,Q_i$ as in Proposition~\ref{thm:MRTChow} and $\delta$ as above. Then by Lemma~\ref{le:nairtenenbaum} (and the fact that $\mathcal{S}^c\subset \bigcup_{j\leq J}\mathcal{S}_j^c$, where $\mathcal{S}_j^c$ is the set of integers having no prime factors in $[P_j,Q_j]$) for all $X\geq 2$ we have 
\begin{align}\label{eq:f1gcor}
\bigg|\sum_{\substack{n\leq X\\n+h\notin \mathcal{S}}} f_1(n+h) g(n)\bigg| &\le
\sum_{n\le X}\1_{\mathcal S^c}(n+h) \prod_{j=1}^{\ell}\Lambda(n+a_j) \nonumber\\
&\ll \sum_{j\leq J}X\prod_{p\in [P_j,Q_j]}\left(1-\frac{1}{p}\right) \prod_{j=1}^{\ell}\left(\frac{|h-a_j|}{\varphi(|h-a_j|)}\right)^{2^{\ell+1}}.
\end{align}

By \eqref{eq_Sbound} we have
\begin{align*}
\sum_{j\leq J}\prod_{p\in [P_j,Q_j]}\left(1-\frac{1}{p}\right) \ll \sum_{j\leq J} \frac{\log P_j}{\log Q_j}\ll\frac{\log P_1}{\log Q_1}\ll \delta+\exp(-M(f;X,Q)/2000) =: \delta'.
\end{align*}
Also by Lemma~\ref{lemma_eulerphi}, we have
\begin{align*}
\sum_{h\leq H}\prod_{j=1}^{\ell}\left(\frac{|h-a_j|}{\varphi(|h-a_j|)}\right)^{2^{\ell+1}}\ll H,
\end{align*}
and so plugging back into \eqref{eq:f1gcor} gives
\begin{align}\label{eq_HXS}
\sum_{h\leq H}\bigg|\sum_{\substack{n\leq X\\n+h\notin \mathcal{S}}} f_1(n+h)g(n)\bigg| \ \ll \ \delta' \,XH.
\end{align}

Let 
\begin{align*}
p:=2+2\lceil\ell/\eps\rceil.
\end{align*}
In view of \eqref{eq_HXS}, to prove Theorem~\ref{thm:multsums} it suffices to show that
\begin{align}\label{eq:m1Sph}
\sum_{h\leq H}\bigg|\sum_{n\leq X}f(n+h)g(n)\bigg|\ll \left(\frac{\log \log H}{\log H}+\exp(-M(f;X,Q)/(2000p))\right)HX,
\end{align}
since we can crudely estimate $2000p<10000\ell/\varepsilon$. 
Note that by Lemma~\ref{Selbtuple}, we can take
\begin{align*}
E_{g}(X)\asymp 1,    
\end{align*}
and we can trivially take $E_{f}(X), E_{|f|+|f|^2}(X)\asymp 1$, so it suffices to prove \eqref{eq:m1Sph} with an extra factor of $E_{|f|+|f|^2}(X)E_{g}(X)$ on the right-hand side.

 By Proposition~\ref{lem:Lambda6moment}  (and the fact that $p$ is even and satisfies $\ell+\eps > \ell p/(p-2)$), there exists a constant $B=B_{\ell,\eps}>0$ for which
\begin{align*}
\int_{0}^{1}\int_{0}^X\bigg|\sum_{x\leq n\leq x+2H}e(\alpha n)\prod_{i=2}^kf_i(n+h_i)\prod_{j=1}^{\ell}\Lambda(n+a_j)\bigg|^p  \dd x\dd\alpha \ \le \ BXH^{p-1}.
\end{align*}
Therefore, $g$ satisfies hypothesis $\mathsf{H}_1(X,H,B',p)$ for some $B'\ll 1$.

We also note that by Proposition~\ref{thm:MRTChow} $f$ satisfies $\mathsf{H}_2(X,H,\eta)$ with $$\eta\ll  H^{-\delta}+\exp(-M(f;X,Q)/2000).$$ 
Now, applying  Proposition~\ref{thm:fourier} with the choices of $f,g$ above, we obtain
\begin{align}\begin{split}\label{eq:final}
\sum_{h\leq H}\bigg|\sum_{n\leq X}f_1\1_{\mathcal{S}}(n+h) &\prod_{i=2}^kf_i(n+h_i)\prod_{j=1}^{\ell}\Lambda(n+a_j)\bigg|\\
&\ll \eta^{1/p}HX \ \ll \ (H^{-\delta/p}+\exp(-M(f;X,Q)/(2000p)))HX.
\end{split}
\end{align}
Thus since $\delta= 10^4\ell\varepsilon^{-1}(\log \log H)/(\log H)$, this gives \eqref{eq:m1Sph}, as desired. The proof of Theorem~\ref{thm:multsums} (and hence of Theorem~\ref{thm_hybrid}(i)) is now complete
\end{proof}

\begin{proof}[Proof of Theorem~\ref{thm_hybrid}(ii).] Let $\delta$ be as in the theorem, so that in particular $\delta\leq c/C$ with $c>0$ be small enough. Take $c=1/10000$. Applying \eqref{eq:final} and Markov's inequality we see that
\begin{align*}
\bigg|\sum_{n\leq X}\mu\1_{\mathcal{S}}(n+h) &\prod_{i=2}^k\mu(n+h_i)\prod_{j=1}^{\ell}\Lambda(n+a_j)\bigg|\\
&\ll  (H^{-\delta/(2p)}+H^{\delta/(2p)}\exp(-M(\mu;X,Q)/(2000p)))HX    
\end{align*}
for all but $\ll H^{1-\delta/(2p)}$ integers $h\leq H$. By \eqref{eq:dist} we have $M(\mu;X,Q)\geq \frac{1}{4}\log \log X+O(1)$, so we see that
\begin{align*}
H^{\delta/(2p)}\exp(-M(\mu;X,Q)/(2000p))\ll H^{-\delta/(4p)} \ll \delta,   
\end{align*}
and the claim follows. 
\end{proof}

\bibliographystyle{amsplain}

\end{document}